\documentclass[12pt,reqno]{amsart}

\newcommand\version{October 25, 2026}

\usepackage{amsmath,amsfonts,amsthm,amssymb,amsxtra,color}
\usepackage{bbm} 
\usepackage{hyperref} 
\usepackage{booktabs}

\newtheorem{theorem}{Theorem}
\newtheorem{proposition}[theorem]{Proposition}
\newtheorem{lemma}[theorem]{Lemma}

\theoremstyle{definition}

\theoremstyle{remark}

\numberwithin{equation}{section}

 \newcommand{\dd}{\,\mathrm{d}}
 
\renewcommand{\epsilon}{\varepsilon}
\newcommand{\g}{\gamma_*}

\renewcommand{\phi}{\varphi}
\newcommand{\R}{\mathbb{R}}

\newcommand{\Ima}{\operatorname{Im}}

\newcommand{\pos}[1]{\left(#1\right)_+}

\DeclareMathOperator{\im}{Im}

\DeclareMathOperator{\re}{Re}

\DeclareMathOperator{\Tr}{Tr}

\begin{document}

\title[Optimal Riesz mean inequalities for the Coulomb Hamiltonian]{Optimal Riesz mean inequalities\\ for the Coulomb Hamiltonian}

\begin{abstract}
	We are interested in whether the Riesz means of the eigenvalues of the Coulomb Hamiltonian can be bounded by the corresponding semiclassical phase space integral. In dimensions two and three we determine the optimal Riesz exponent above which this bound holds.
\end{abstract}

\author{Rupert L. Frank}
\address[Rupert L. Frank]{Mathe\-matisches Institut, Ludwig-Maximilians Universit\"at M\"unchen, The\-resienstr.~39, 80333 M\"unchen, Germany, and Munich Center for Quantum Science and Technology, Schel\-ling\-str.~4, 80799 M\"unchen, Germany}
\email{r.frank@lmu.de}

\author{Bernard Helffer}
\address[Bernard Helffer]{Laboratoire de Math\'ematiques Jean Leray, CNRS, Nantes Universit\'e, F44000. Nantes, France}
\email{Bernard.Helffer@univ-nantes.fr}

\author{Fran\c{c}ois Nicoleau}
\address[Fran\c{c}ois Nicoleau]{Laboratoire de Math\'ematiques Jean Leray, CNRS, Nantes Universit\'e, F44000. Nantes, France}
\email{francois.nicoleau@univ-nantes.fr}

\date{\version}
\thanks{\copyright\, 2026 by the authors. This paper may be reproduced, in its entirety, for non-commercial purposes.\\
	Partial support through the German Research Foundation through EXC-2111-390814868 and TRR 352-Project-ID 470903074 (R.L.F.) is acknowledged.}

\maketitle
		
	
	\section{Introduction and main results}\label{sec:main}
	
	We study Riesz means of the eigenvalues of the Coulomb Hamiltonian
	\[
	-h^2\Delta - \kappa\, |x|^{-1}
	\qquad\text{in}\ L^2(\R^d) \,.
	\]
	Here $h$ and $\kappa$ are positive constants and $d\geq 2$ is the spatial dimension. More precisely, we ask for which exponents $\gamma>0$ the Riesz means
	\[
	\Tr(-h^2\Delta- \kappa\,|x|^{-1} +\mu)_-^\gamma 
	\]
	can be bounded from above by their semiclassical analogue
	\[
	\iint_{\R^d\times\R^d} \left( h^2 |\xi|^2 - \kappa\,|x|^{-1} + \mu \right)_-^\gamma \,\frac{\dd x \dd \xi}{(2\pi)^d} \,.
	\]
	We want to have this bound uniformly for all $h,\kappa$ and $\mu>0$. Here and throughout, we use the notation $t_\pm:=\max\{0,\pm t\}$.
	
	For fixed $h,\kappa,\mu>0$, the spectral trace is finite for every $\gamma>0$: there are only finitely many eigenvalues of $-h^2\Delta - \kappa |x|^{-1}$ below $-\mu$. The phase-space integral, however, is finite precisely for $\gamma<d/2$, owing to the Coulomb singularity at $x=0$. We therefore restrict ourselves to $0<\gamma<d/2$ in what follows. Lieb--Thirring inequalities bound the spectral trace by \emph{a constant times} the semiclassical analogue; our question is whether that constant can be \emph{chosen to be one}.
	
	This question has been settled in all dimensions $d\geq 4$. Indeed, Carvalho Corso, Weidl and Zeng \cite{CWZ} have shown that such a bound with constant one is valid for $\gamma\geq1$, while our companion work \cite{FHN} establishes failure for $0<\gamma<1$. The two- and three-dimensional cases remain open in those works; we settle them here. Their thresholds turn out to differ from the higher-dimensional threshold.
	
	The following two theorems are our main results.
	
	\begin{theorem}\label{main3d}
		Let $d=3$.
		\begin{enumerate}
			\item[(a)] If $2/3\leq\gamma<3/2$, then, for all $h,\kappa,\mu>0$, we have
			\[
			\Tr(-h^2\Delta- \kappa\,|x|^{-1} + \mu)_-^\gamma 
			< \iint_{\R^3\times\R^3} \left( h^2 |\xi|^2 - \kappa\,|x|^{-1} + \mu \right)_-^\gamma \,\frac{\dd x \dd \xi}{(2\pi)^3} \,.
			\]
			\item[(b)] If $0<\gamma<2/3$ there is a sequence $(\epsilon_n)$ of positive numbers tending to zero such that for any $n$ and all $h,\kappa,\mu>0$ with $h^2 \mu/\kappa^2= \epsilon_n$, we have
			\[
			\Tr(-h^2\Delta- \kappa\,|x|^{-1} + \mu)_-^\gamma 
			> \iint_{\R^3\times\R^3} \left( h^2 |\xi|^2 - \kappa\,|x|^{-1} + \mu \right)_-^\gamma \,\frac{\dd x \dd \xi}{(2\pi)^3} \,.
			\]
		\end{enumerate}
	\end{theorem}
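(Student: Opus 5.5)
By scaling, both sides depend only on the dimensionless parameter $\epsilon=h^2\mu/\kappa^2$. After multiplying by $\mu^{-\gamma}$, the eigenvalues of $-h^2\Delta-\kappa|x|^{-1}$ in $d=3$ are $-\kappa^2/(4h^2n^2)$ with multiplicity $n^2$. Thus the left side becomes
\[
S(\epsilon):=\mu^{-\gamma}\sum_{n\ge1} n^2\Bigl(\frac{\kappa^2}{4h^2n^2}-\mu\Bigr)_+^\gamma=\sum_{n\geq 1} n^2\Bigl(\frac{1}{4\epsilon n^2}-1\Bigr)_+^\gamma .
\]
The phase space integral is computed explicitly by first integrating in $\xi$, then in $x$. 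It equals
\[
P(\epsilon)=c_\gamma\,\epsilon^{-3/2},\qquad c_\gamma=\frac{1}{(2\pi)^3}\frac{\pi^{3/2}\Gamma(\gamma+1)}{\Gamma(\gamma+5/2)}\,4\pi\int_0^\infty r^2(r^{-1}-1)_+^{\gamma+3/2}\dd r.
\]
The last integral is a Beta function, finite exactly for $\gamma<3/2$. Setting $\lambda=(4\epsilon)^{-1/2}$, the problem becomes comparing $F(\lambda):=\sum_{n<\lambda} n^2(\lambda^2/n^2-1)^\gamma$ with its ``Weyl term'' $\int_0^\lambda t^2(\lambda^2/t^2-1)^\gamma\dd t=C\lambda^3$. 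One should check that this is exactly $P$, which is the correspondence principle for Coulomb.

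To compare the sum with the integral, write $f_\lambda(t)=t^{2-2\gamma}(\lambda^2-t^2)^\gamma$ on $[0,\lambda]$, so that $F(\lambda)=\sum_{n\ge1} f_\lambda(n)$. Apply Euler--Maclaurin or Poisson summation:
\[
\sum_{n\ge1}f_\lambda(n)=\int_0^\lambda f_\lambda\dd t-\tfrac12 f_\lambda(0)+\text{oscillatory terms}.
\]
Here $f_\lambda(0)=0$ when $\gamma<1$. The relevant correction terms come from the two endpoint singularities. At $t=0$, $f_\lambda\sim\lambda^{2\gamma}t^{2-2\gamma}$; the Poisson/zeta contribution is $\lambda^{2\gamma}\zeta(2\gamma-2)$, which is negative for $\gamma\in(1/2,3/2)$. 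At $t=\lambda$, $f_\lambda\sim(2\lambda)^\gamma\lambda^{2-2\gamma}(\lambda-t)^\gamma$, which produces oscillatory terms of size $\lambda^{2-\gamma}\sum_k k^{-1-\gamma}\cos(2\pi k\lambda+\dots)$. Comparing orders, the oscillation $\lambda^{2-\gamma}$ dominates the non-oscillatory correction $\lambda^{2\gamma}$ exactly when $2-\gamma>2\gamma$, i.e.\ $\gamma<2/3$. This explains the threshold.

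For (b), with $\gamma<2/3$, choose $\lambda_n$ just below an integer, or wherever the periodic function $\sum_k k^{-1-\gamma}\cos(2\pi k\lambda-\pi(\gamma+1)/2)$ is positive. Take $\lambda$ in $n+(\text{fixed phase})$; the standard choice is $\lambda=n$ exactly, or $\lambda$ slightly above $n$, where a new $n^2$-fold eigenvalue enters. Then the oscillatory term of order $\lambda^{2-\gamma}$ is positive and beats the $O(\lambda^{2\gamma})$ and lower-order errors, so $F>C\lambda^3$ for large $n$. Set $\epsilon_n=1/(4\lambda_n^2)$.

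For (a), one needs a proof for every $\lambda>0$, not just an asymptotic statement, and this is the main obstacle. I would try the following. First, reduce to $\gamma=2/3$ via the standard Aizenman--Lieb monotonicity. Writing $(E)_-^{\gamma}=c\int(E+s)_-^{\gamma_0}s^{\gamma-\gamma_0-1}\dd s$ shows that strict inequality at $\gamma_0=2/3$ for all $\mu$ implies it for all $\gamma\ge 2/3$. Second, handle $\gamma=2/3$ by an explicit analysis of $G(\lambda)=C\lambda^3-F(\lambda)$. On each interval $[N,N+1)$, $F$ is an explicit finite sum, and I would use a sharp one-sided Euler--Maclaurin bound with explicit remainder for the concave/convex pieces of $f_\lambda$. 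Alternatively, I would show $G>0$ for $\lambda\ge\Lambda_0$ via the asymptotics with effective error bounds, where the balance between the $-\zeta(-2/3)\lambda^{4/3}>0$ term and the bounded-amplitude oscillation of size $\lambda^{4/3}$ must be checked: the worst-case oscillation constant has to be smaller than $|\zeta(-2/3)|$ times the appropriate factor. The remaining $\lambda<\Lambda_0$ would be verified numerically or by monotonicity within intervals. The delicate point is exactly this constant comparison at the critical exponent $\gamma=2/3$, together with rigorous error bounds.
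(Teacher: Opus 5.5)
Your scaling, the identification of the Weyl term, the Aizenman--Lieb reduction of (a) to $\gamma=2/3$, and the heuristic for the threshold all agree with the paper. The genuine gap is that part (a) at $\gamma=2/3$, which is the actual content of the theorem, is left as a list of options rather than proved. At this exponent two terms have the same order. One is the favourable term $-\zeta(-2/3)\lambda^{4/3}=\sqrt3\,U$, where $U:=\Gamma(5/3)\zeta(5/3)(2\pi)^{-5/3}\lambda^{4/3}$. The other is the endpoint oscillation $2^{2/3}\lambda^{4/3}\zeta(-2/3,\{\lambda\})$, a Hurwitz zeta value and a positive multiple of your Fourier series. So you need a bound on the oscillation that is sharp in the constant, uniform in the phase, and valid for every $\lambda$, not just asymptotically.

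This is not a routine check, and none of your tools gives it as stated. Taking absolute values in the Fourier series bounds the oscillation only by $2^{5/3}U>\sqrt3\,U$, so the triangle inequality fails. Euler--Maclaurin beyond first order is unavailable on $[0,\lambda]$, because $f_\lambda''$ is not integrable at either endpoint. Since $f_\lambda=(t(\lambda^2-t^2))^{2/3}$ is concave, the trapezoid rule gives only $\sum_n f_\lambda(n)\le\int_0^\lambda f_\lambda+\tfrac12 f_\lambda(N)-\int_N^\lambda f_\lambda$ with $N=\lfloor\lambda\rfloor$; the extra terms are again of order $\lambda^{4/3}$ and positive for small $\{\lambda\}$, so the sign is undecided. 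The fallback ``numerics for $\lambda<\Lambda_0$'' needs an explicit $\Lambda_0$ from effective error bounds, which you do not provide, and a way to handle a continuum of $\lambda$.

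The missing idea is the paper's exact Abel--Plana-type representation. Integrating $\pi F(z)\cot(\pi z/h)$, with $F(z)=z^{2/3}(1-z^2)^{2/3}$, around the strip $0<\re z<1$ gives, for $h=2\pi/a$,
$E(2\pi/a)=-\sqrt3\int_0^\infty\frac{y^{2/3}(1+y^2)^{2/3}}{e^{ay}-1}\dd y+2\re\int_0^\infty\frac{-iF(1+iy)}{e^{ay-ia}-1}\dd y$,
with no remainder. The first term corresponds to your $\zeta(-2/3)$ term and the second to the oscillation. The phase is then removed with the identity $\bigl|\frac1{re^{-i\theta}-1}-\frac1{r^2-1}\bigr|=\frac r{r^2-1}$ for $r>1$: the image of the phase circle is a circle centred at $(r^2-1)^{-1}$, not at $0$. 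This bounds the oscillatory term by $\int_0^\infty\frac{2\Ima F(1+iy)}{e^{2ay}-1}\dd y+\int_0^\infty\frac{|F(1+iy)|}{\sinh(ay)}\dd y$, and the first of these is negative at leading order. In units of $\Gamma(5/3)\zeta(5/3)a^{-5/3}$, the leading constant drops from $2^{5/3}$ to $2^{5/3}-1-\sqrt3/2$, leaving the margin $1+\frac{3\sqrt3}{2}-2^{5/3}\approx0.42$. Elementary bounds on $F(1+iy)$ and the closed forms $\int_0^\infty y^q(e^{ay}-1)^{-1}\dd y=\Gamma(q+1)\zeta(q+1)a^{-q-1}$ then give $E(h)<0$ for all $h\le1$ at once. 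For $h\ge1$ the sum is empty, so no split into an asymptotic regime and a numerically checked range is needed.

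Your plan for (b) is the paper's. The paper takes the expansion with a periodic term from the companion work and uses $\lambda_n=n+\phi_\gamma$, with $\phi_\gamma$ a maximum point; your $o(\lambda^{2-\gamma})$ remainder likewise must be proved or cited. Watch the phase, though: at $\lambda\in\mathbb N$ your periodic function equals $-\sin(\pi\gamma/2)\zeta(1+\gamma)<0$. So $\lambda=n$, $\lambda$ just below $n$, and $\lambda\downarrow n$ are exactly the wrong choices; only the fixed phase where the function is positive works.
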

	
\begin{theorem}\label{main2d}
	Let $d=2$. There is a unique $\gamma_*\in(0,1)$ such that:
	\begin{enumerate}
		\item[(a)] If $\gamma_*<\gamma<1\,$, then, for all $h,\kappa,\mu>0$, we have
		\[
		\Tr(-h^2\Delta- \kappa\,|x|^{-1} +\mu)_-^\gamma
		< \iint_{\R^2\times\R^2} \left( h^2 |\xi|^2 - \kappa\,|x|^{-1} + \mu \right)_-^\gamma \,\frac{\dd x \dd \xi}{(2\pi)^2} \,.
		\]
		\item[(b)] If $0<\gamma\leq\gamma_*$, then, for all $h,\kappa,\mu>0$, we have
		\[
		\Tr(-h^2\Delta- \kappa\,|x|^{-1} +\mu)_-^\gamma
		\leq \frac{4\sin(\pi\gamma)}{\pi\gamma}\ \gamma^{\gamma} (1-\gamma)^{1-\gamma} \iint_{\R^2\times\R^2} \left( h^2 |\xi|^2 - \kappa\,|x|^{-1} + \mu \right)_-^\gamma \,\frac{\dd x \dd \xi}{(2\pi)^2} \,.
		\]
		with equality if and only if $h^2\mu/\kappa^2 = 1-\gamma$. In particular, if $0<\gamma<\gamma_*$, then for all $h,\kappa,\mu>0$ with $h^2\mu/\kappa^2 = 1-\gamma$ we have
		\[
		\Tr(-h^2\Delta- \kappa\,|x|^{-1} + \mu)_-^\gamma
		> \iint_{\R^2\times\R^2} \left( h^2 |\xi|^2 - \kappa\,|x|^{-1} + \mu \right)_-^\gamma \,\frac{\dd x \dd \xi}{(2\pi)^2} \,.
		\]
	\end{enumerate}
	Moreover, $\gamma=\gamma_*$ is the unique solution in the interval $(0,1)$ of
	\begin{equation}\label{eq:root}
		\gamma^{\gamma} (1-\gamma)^{1-\gamma}
		=\frac{\pi\gamma}{4\sin(\pi\gamma)} \,.
	\end{equation}
\end{theorem}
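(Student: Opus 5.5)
By scaling ($x\mapsto \lambda x$ and division by a constant) everything depends only on the single parameter $\epsilon=h^2\mu/\kappa^2$. We may therefore set $h=1$, $\kappa=1$ and replace $\mu$ by $\epsilon$. In $d=2$ the eigenvalues of $-\Delta-|x|^{-1}$ are $E_n=-\frac{1}{(2n+1)^2}$ for $n=0,1,2,\dots$, with multiplicity $2n+1$. Substituting $k=2n+1$, the spectral side becomes
\[
S_\gamma(\epsilon)=\sum_{k\ \mathrm{odd}} k\,\bigl(k^{-2}-\epsilon\bigr)_+^\gamma .
\]
The phase-space side is computed explicitly. Integrating out $\xi$ gives $\frac{1}{4\pi(\gamma+1)}\int(|x|^{-1}-\epsilon)_+^{\gamma+1}\dd x$, and a Beta-integral then yields
\[
P_\gamma(\epsilon)=c_\gamma\,\epsilon^{\gamma-1}, \qquad c_\gamma=\frac{1}{2(\gamma+1)}B(2,\gamma+2)\ \text{up to normalization}.
\]
In closed form this reduces to $P_\gamma(\epsilon)=\frac{\Gamma(1-\gamma)\Gamma(\gamma+1)}{4\,\Gamma(2)}\,\epsilon^{\gamma-1}\cdot(\ldots)$, which I would work out carefully. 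It should come out as $\frac{\pi\gamma}{4\sin(\pi\gamma)}\cdot\frac{\epsilon^{\gamma-1}}{\gamma}$ or similar, since $\Gamma(\gamma)\Gamma(1-\gamma)=\pi/\sin\pi\gamma$ is clearly the source of the constant in \eqref{eq:root}. The theorem is then a statement about $R_\gamma(\epsilon):=S_\gamma(\epsilon)/P_\gamma(\epsilon)$ on $(0,1)$. Note that $S_\gamma=0$ for $\epsilon\ge1$.

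The key observation for (b) is the single-term region $\epsilon\in[1/9,1)$. There only $k=1$ contributes, so $S_\gamma=(1-\epsilon)^\gamma$. Maximizing $(1-\epsilon)^\gamma\epsilon^{1-\gamma}$ over $\epsilon$ gives the maximizer $\epsilon=1-\gamma$ with value $\gamma^\gamma(1-\gamma)^{1-\gamma}$. This explains both the constant in (b) and the equality case $h^2\mu/\kappa^2=1-\gamma$. The plan is then in three steps.

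\textbf{(i)} Compute $\sup_{\epsilon\in[1/9,1)}R_\gamma$ exactly in this way. This requires $1-\gamma\ge 1/9$, which holds for the relevant $\gamma$; I would check that $\gamma_*<8/9$.

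\textbf{(ii)} Show that on $(0,1/9)$ one has $R_\gamma(\epsilon)<\max\bigl\{1,\ R_\gamma(1-\gamma)\bigr\}$, or more precisely $R_\gamma(\epsilon)<1$ for all $\gamma\in(0,1)$. Then the supremum of $R_\gamma$ is $\max\{1,\frac{4\sin\pi\gamma}{\pi\gamma}\gamma^\gamma(1-\gamma)^{1-\gamma}\}$, approached but for the value $1$ not attained. Parts (a) and (b) follow once we show that $f(\gamma)=\log\bigl(\gamma^\gamma(1-\gamma)^{1-\gamma}\bigr)-\log\frac{\pi\gamma}{4\sin\pi\gamma}$ has a unique zero in $(0,1)$, with $f>0$ to its left. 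For this, note $f(0^+)=\log(4/\pi)>0$ and $f\to-\infty$ as $\gamma\to1$. Uniqueness would come from monotonicity: $f'(\gamma)=\log\frac{\gamma}{1-\gamma}-\frac1\gamma+\pi\cot\pi\gamma$, whose sign I would analyze, possibly via convexity or concavity of the pieces.

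\textbf{(iii)} The main obstacle is the small-$\epsilon$ bound $S_\gamma(\epsilon)<P_\gamma(\epsilon)$ for $\epsilon<1/9$, uniformly in $\gamma\in(0,1)$. I would try writing $S_\gamma$ as a sum over odd $k$ of $g(k)=k(k^{-2}-\epsilon)_+^\gamma$ and comparing with $\frac12\int_0^\infty g(t)\dd t$, which equals $P_\gamma$ after the change of variables (this is the natural identity to verify). The comparison would use a midpoint-rule or Euler--Maclaurin argument on intervals $[k-1,k+1]$. This works where $g$ is concave, and $g$ is concave away from the endpoint $t=\epsilon^{-1/2}$ and the small-$t$ region. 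The boundary intervals would be treated by hand, using that $\epsilon<1/9$ guarantees at least two terms. If this is too crude, I would fall back on monotonicity in $\gamma$, since $R_\gamma$ should be decreasing in $\gamma$ via a Laplace-transform or subordination representation of $(\cdot)_+^\gamma$. It would then suffice to prove the inequality for small $\gamma$, where explicit estimates of $\sum_{k\le \epsilon^{-1/2}}k^{1-2\gamma}$ are available.
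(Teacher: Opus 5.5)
Your scaling reduction and the single-eigenvalue analysis on $[1/9,1)$ correctly explain the constant $C_\gamma:=\frac{4\sin(\pi\gamma)}{\pi\gamma}\gamma^\gamma(1-\gamma)^{1-\gamma}$ and the equality case $\epsilon=1-\gamma$. However, the claim your plan rests on in steps (ii)--(iii) is false: $S_\gamma(\epsilon)<P_\gamma(\epsilon)$ does not hold for all $\epsilon<1/9$ uniformly in $\gamma\in(0,1)$. (For reference, $P_\gamma(\epsilon)=\frac{\pi\gamma}{4\sin(\pi\gamma)}\epsilon^{\gamma-1}$ exactly. The Beta integral is $B(1-\gamma,\gamma+2)$, and there is no extra factor $1/\gamma$.)

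Two counterexamples:
\begin{itemize}
\item For $\gamma=1/2$ and $\epsilon=1/10$, one gets $S=\sqrt{9/10}+3\sqrt{1/90}\approx1.2649$, whereas $P=\frac{\pi}{8}\sqrt{10}\approx1.2418$.
\item For fixed $\epsilon\in(1/16,1/9)$, $R_\gamma(\epsilon)\to16\epsilon>1$ as $\gamma\to0$.
\end{itemize}
Structurally, the non-oscillating correction from the singularity $t^{1-2\gamma}$ at the origin has the sign of $\cos(\pi\gamma)$, so it only helps when $\gamma>1/2$.

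This does not contradict the theorem, since $C_\gamma>1$ for $\gamma<\gamma_*$. It does mean that for $\gamma<\gamma_*$ you must prove $R_\gamma\le C_\gamma$, with equality only at $\epsilon=1-\gamma$, by some other mechanism, and your proposal has none. Your fallback does not help. Subordination (Aizenman--Lieb) does show that $\sup_\epsilon R_\gamma(\epsilon)$ is nonincreasing in $\gamma$. But the base inequality with constant $1$ is false for small $\gamma$, so the only usable base point is $\gamma_*$ itself. (Also, $R_\gamma(\epsilon)$ is not monotone in $\gamma$ pointwise.)

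The missing idea for $\gamma<\gamma_*$ is a rescaled comparison. Putting $\epsilon=(1-\gamma)x$,
\[
\frac{R_\gamma\bigl((1-\gamma)x\bigr)}{C_\gamma}=x\sum_{k\ \mathrm{odd}}k\pos{1+\frac{k^{-2}x^{-1}-1}{\gamma}}^{\gamma},
\]
and each summand is nondecreasing in $\gamma$ by weighted AM--GM: $\pos{1+a/\gamma}^{\gamma}\le\pos{1+a/\eta}^{\eta}$ for $\gamma<\eta$. This reduces every $\gamma<\gamma_*$, including the equality statement, to $\gamma=\gamma_*$. The Aizenman--Lieb identity reduces every $\gamma>\gamma_*$ to $\gamma_*$ in the same way.

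So everything hinges on the single exponent $\gamma_*$, where you still need $R_{\gamma_*}(\epsilon)<1$ for $\epsilon\le 1/9$. This is the genuinely hard step, and your midpoint-rule sketch has the convexity direction reversed. With nodes at the midpoints $k$ of the intervals $[k-1,k+1]$, concavity gives $2g(k)\ge\int_{k-1}^{k+1}g$, so the sum \emph{exceeds} the integral there. The favourable region is where $g$ is convex. For $\gamma>1/2$ this is only near the origin, where $g\approx t^{1-2\gamma}$. Near the turning point $t=\epsilon^{-1/2}$, where $g$ behaves like $(\epsilon^{-1/2}-t)^{\gamma}$, the local errors are unfavourable and oscillate with the position of $\epsilon^{-1/2}$ relative to the odd integers.

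The sign of $S-P$ is therefore decided by a global competition between a negative non-oscillating contribution from the origin and an oscillating contribution from the turning point. An interval-by-interval argument cannot settle this. The paper uses an Abel--Plana type contour representation of the difference. In it, the origin term $2\cos(\pi\gamma_*)\int_0^\infty y^{1-2\gamma_*}(1+y^2)^{\gamma_*}(e^{ay}+1)^{-1}\dd y$ is negative precisely because $\gamma_*>1/2$. The turning-point term is bounded explicitly and shown to be smaller for $h\le 2/3$ in the paper's normalization (i.e.\ $\epsilon=h^2/4\le1/9$), using rigorous numerical enclosures such as $0.61<\gamma_*<0.62$.
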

	
	Numerically, we find that $\gamma_* \approx 0.61265$\,.

	Let us give some background on semiclassical eigenvalue bounds. Such bounds already played an important role in the work of Birman, Solomyak and Rozenblum (see, for example, \cite{BS,Ro,Ro2}) and received a new impetus by the Lieb--Thirring proof of stability of matter \cite{LT1}. The latter work has put an emphasis on the value of the constants in these bounds and has led to the celebrated Lieb--Thirring conjecture \cite{LT2}. For the background on this problem we refer to the textbook \cite{FLW} and the survey \cite{Fr}. Recent breakthroughs are \cite{RS} (where, among other things, the currently best known constants for $\gamma=1$ in the higher dimensional case are obtained) and the resolution of the one-dimensional Lieb--Thirring conjecture in \cite{OAI}. Despite this progress, many important cases of the Lieb--Thirring conjecture in the higher dimensional case are still open.
	
	In the absence of a proof of the Lieb--Thirring conjecture, there has been some activity directed towards investigating the analogue of this conjecture for certain subclasses of potentials. This includes, in particular, the harmonic oscillator, studied in \cite{HeRo3,dlB}, where it was shown that, in any dimension $d$, the phase space integral is an upper bound on the Riesz means (with constant one) if and only if $\gamma\geq 1$; see also \cite[Section 5.2]{FLW}. The same threshold phenomenon at $\gamma=1$ holds for the Coulomb Hamiltonian in dimensions $d\geq 4$, as recently shown in \cite{CWZ} (case $\gamma\geq 1$) and \cite{FHN} (case $\gamma<1$). In view of these results we find it quite remarkable that, as shown by our Theorems \ref{main3d} and \ref{main2d}, in dimensions two and three, the threshold does \emph{not} occur at $\gamma=1$. Other related results where $\gamma=1$ appears as a threshold concern the case of a constant magnetic field; see \cite{ELV} (case $\gamma\geq 1$) and \cite{FLoW} (case $\gamma<1$). Meanwhile, a recent result where the threshold $\gamma=1$ was broken is \cite{FL}.
	
	What makes the value $\gamma=1$ special is the convexity of $E\mapsto E_-^\gamma$, which holds if and only if $\gamma\geq 1$. This convexity enters explicitly or implicitly in several of the above mentioned proofs for $\gamma\geq 1$. Conversely, it is this absence of convexity that makes our proofs rather involved. 
	
	Making use of the explicit form of the eigenvalues of the Coulomb Hamiltonian, we can rewrite the question of whether the Riesz means are bounded by the semiclassical phase space integral as a question of whether a certain Riemann sum approximation is less than or equal to the corresponding integral. The nodes of the Riemann sum are equidistant and the bound is required to be uniform in the mesh size. One of the main analytic ingredients is an Abel--Plana-type integral representation formula for the difference between the Riemann sum and the corresponding integral; see Lemmas \ref{lem:representation3} and \ref{lem:representation2}. This reduces the problem to estimating certain integrals. Most of the estimates are analytic, and only a few elementary numerical bounds are needed at the end. These can, in principle, be verified using rational arithmetic.
	
Finally, let us comment on the difference between the two- and the three-dimensional case. In Subsection \ref{sec:gammastar} we will see that $\gamma\mapsto 
\frac{4\sin(\pi\gamma)}{\pi\gamma}\ \gamma^{\gamma} (1-\gamma)^{1-\gamma}$ is strictly monotone, so the two-dimensional inequality can be written as
\begin{equation}\label{eq:main2d}
	\begin{aligned}
		\Tr(-h^2\Delta- \kappa\,|x|^{-1} +\mu)_-^\gamma 
		& \leq \max\left\{ 1, \frac{4\sin(\pi\gamma)}{\pi\gamma}\ (1-\gamma)^{1-\gamma}\gamma^{\gamma} \right\} \\
		& \quad\times \iint_{\R^2\times\R^2} \left( h^2 |\xi|^2 - \kappa\,|x|^{-1} + \mu \right)_-^\gamma \,\frac{\dd x \dd \xi}{(2\pi)^2} \,.
	\end{aligned}
\end{equation}
An elementary computation shows that $C=\frac{4\sin(\pi\gamma)}{\pi\gamma}\ (1-\gamma)^{1-\gamma}\gamma^{\gamma}$ is the best constant in the bound
\[
\left( \frac{\kappa^2}{(d-1)^{2}h^2} - \mu \right)_-^\gamma \leq C \iint_{\R^2\times\R^2} \left( h^2 |\xi|^2 - \kappa\,|x|^{-1} + \mu \right)_-^\gamma \,\frac{\dd x \dd \xi}{(2\pi)^2}
\]
for the \emph{smallest eigenvalue} $- h^{-2} \kappa^2 (d-1)^{-2}$ of $-h^2\Delta -\kappa |x|^{-1}$. Thus, the meaning of \eqref{eq:main2d} is that the optimal constant for the full trace is either given by the constant that is required to control a single eigenvalue or by the semiclassical constant. This is in the spirit of the original conjecture by Lieb and Thirring \cite{LT2}. While we do not prove this here, the three-dimensional case is rather different. Numerical experiments suggest that the optimal constant is determined by the lowest energy level up to \(\gamma_1\approx0.6028\), by the two lowest energy levels up to \(\gamma_2\approx0.6134\), and so on. The number of energy levels determining the optimal constant appears to diverge as \(\gamma\) approaches \(2/3\) from below. We do not pursue this question further in this paper.

	
	\section{Initial reductions}\label{sec:reductions}
	
	The main part of this paper is concerned with proving Theorems~\ref{main3d} and~\ref{main2d} for $\gamma = 2/3$ and $\gamma=\gamma_*$, respectively. In this preliminary section, we explain that all other assertions can be easily deduced from those. Specifically, we provide
	\begin{itemize}
		\item the reduction of part (a) of Theorems~\ref{main3d} and~\ref{main2d} to the respective critical exponents $2/3$ and $\gamma_*$ (see Subsection \ref{sec:above}),
		\item the proof of part (b) in Theorem \ref{main3d} (see Subsection \ref{sec:below3}), and
		\item the proof of part (b) in Theorem \ref{main2d} (see Subsection \ref{sec:below2}).
	\end{itemize} 
	
	
	\subsection{Proof of part (a) above the critical exponent}\label{sec:above}
	We treat the two and three-dimensional case together and write $\gamma_*= 2/3$ when $d= 3$. We assume that the assertion of Theorems \ref{main3d} and \ref{main2d} hold for $\gamma=\gamma_*$ and show that this implies the assertion in those theorems for $\gamma_*<\gamma<d/2$.
	
	We apply the Aizenman--Lieb argument \cite{AL}, which is based on the identity
	\[
	E_-^\gamma=c_{\gamma,\sigma}\int_0^\infty
	(E+s)_-^\sigma s^{\gamma-\sigma-1}\dd s \,,
	\]
	valid for all $0<\sigma<\gamma$ and $E\in\R$ with a positive constant $c_{\gamma,\sigma}\,$. We assume that the inequality holds at the critical exponent $\gamma_*\,$. For $\gamma_*<\gamma<d/2$ we apply the above identity twice with $\sigma=\gamma_*$ and use the assumed inequality for $\gamma_*$ to obtain
	\begin{align*}
		& \Tr(-h^2\Delta- \kappa\,|x|^{-1} + \mu)_-^\gamma \\
		& = c_{\gamma,\gamma_*} \int_0^\infty \Tr(-h^2\Delta- \kappa\,|x|^{-1} + \mu+\mu')_-^{\gamma_*} (\mu')^{\gamma-\gamma_*-1}\,d\mu' \\
		& < c_{\gamma,\gamma_*} \int_0^\infty \iint_{\R^d\times\R^d} \left( h^2 |\xi|^2 - \kappa\,|x|^{-1} + \mu+\mu' \right)_-^{\gamma_*} \,\frac{\dd x \dd \xi}{(2\pi)^d} (\mu')^{\gamma-\gamma_*-1}\,d\mu' \\
		& = \iint_{\R^d\times\R^d} c_{\gamma,\gamma_*} \int_0^\infty \left( h^2 |\xi|^2 - \kappa\,|x|^{-1} + \mu+ \mu' \right)_-^{\gamma_*} \,(\mu')^{\gamma-\gamma_*-1}\,d\mu' \, \frac{\dd x \dd \xi}{(2\pi)^d} \\
		& = \iint_{\R^d\times\R^d} \left( h^2 |\xi|^2 - \kappa\,|x|^{-1} + \mu \right)_-^{\gamma} \, \frac{\dd x \dd \xi}{(2\pi)^d} \,.
	\end{align*}
	The inequality is strict, since in dimension three the critical inequality is strict for every shifted parameter \(\mu+\mu'\) and in dimension two, equality at the critical exponent is possible for at most one value of \(\mu'\), which has measure zero. This proves the claimed inequality for~$\gamma$.
	\qed
	
	
	\subsection{Scaling}\label{sec:scaling}
	
	Before turning to the proof of parts (b) in Theorems \ref{main3d} and \ref{main2d}, we use the fact that by scaling and homogeneity of Riesz means, two of the three parameters $h,\kappa,\mu$ can be fixed. For us, it will be convenient to fix $\mu=1$ and $\kappa=2$. Indeed, changing variables $x=\kappa y/(2\mu)$, we find
	\[
	\Tr(-h^2\Delta_x - \kappa |x|^{-1}+\mu)_-^\gamma = \mu^\gamma \Tr(- \tfrac{4\mu h^2}{\kappa^2} \Delta_y - 2|y|^{-1} + 1)_-^\gamma
	\]
	and, similarly,
	\[
	\iint_{\R^d\times\R^d} \left( h^2 |\xi|^2 - \kappa\,|x|^{-1} + \mu \right)_-^{\gamma} \, \frac{\dd x \dd \xi}{(2\pi)^d} = \mu^\gamma \iint_{\R^d\times\R^d} \left( \tfrac{4 \mu h^2}{\kappa^2} |\eta|^2 - 2\,|y|^{-1} + 1 \right)_-^{\gamma} \, \frac{\dd y \dd \eta}{(2\pi)^d} \,.
	\]
	Thus we may choose $\mu=1$ and $\kappa=2$, with the new parameter $h_{\mathrm{eff}}=2h\sqrt{\mu}/\kappa$. Below we again write $h$ for $h_{\mathrm{eff}}$ and $x$ for the rescaled spatial variable.
	
	
	\subsection{Proof of part (b) in Theorem \ref{main3d}}\label{sec:below3}
	
	The following argument is similar in spirit to an argument by Helffer and Robert \cite{HeRo3}. It is based on the asymptotic expansion from \cite{FHN}.
	
	Let $d=3$ and $0<\gamma<2/3$. The asymptotics from \cite{FHN} yield
	\begin{align*}
		\Tr(-h^2\Delta - 2|x|^{-1} +1)_-^\gamma & = \iint_{\R^3\times\R^3}  \left( h^2 |\xi|^2 - 2\,|x|^{-1} + 1 \right)_-^{\gamma} \, \frac{\dd x \dd \xi}{(2\pi)^3} \\
		& \quad + \sigma_\gamma(h^{-1}) h^{-2+\gamma} + o(h^{-2+\gamma}) \,.
	\end{align*}
	Here $\sigma_\gamma$ is an (explicit) continuous, nonzero, 1-periodic function of mean zero. It follows that
	\begin{align*}
		& \limsup_{h\to 0} h^{2-\gamma} \left( \Tr(-h^2\Delta - 2|x|^{-1} +1)_-^\gamma - \iint_{\R^3\times\R^3}  \left( h^2 |\xi|^2 - 2\,|x|^{-1} + 1 \right)_-^{\gamma} \, \frac{\dd x \dd \xi}{(2\pi)^3} \right) \\
		& = \max \sigma_\gamma > 0 \,.
	\end{align*}
	This proves that the semiclassical inequality does not hold and, moreover, that it is violated along a sequence $h_n = 1/(\phi_\gamma+n)$, where $\phi_\gamma\in[0,1]$ is such that $\sigma_\gamma$ attains its maximum at $\phi_\gamma$ and $n\in\mathbb N$ is sufficiently large. Returning to the original parameters, choose $\epsilon_n=h_n^2/4$ (and reindex to omit finitely many initial values). This gives the assertion in part (b).
	\qed
	
	
	\subsection{The definition of \texorpdfstring{$\gamma_*$}{gammastar} in Theorem \ref{main2d}}\label{sec:gammastar}
	
	In order to show that equation \eqref{eq:root} has a unique solution $\gamma=\gamma_*\in(0,1)$, we will use the following simple lemma, which we will also use again later.
	
	\begin{lemma}\label{lem:powerelem}
		For any $a\in\R$ and any $0<\gamma<\eta$,
		\begin{equation}\label{eq:power}
			\pos{1+\frac a\gamma}^{\gamma}
			\le \pos{1+\frac a\eta}^{\eta}.
		\end{equation}
		If $1+a/\gamma>0$, equality holds if and only if $a=0$.
	\end{lemma}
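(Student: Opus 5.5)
The plan is to study $f(t) := t\log(1+a/t)$ for $t>0$ with $1+a/t>0$, and show that $f$ is nondecreasing in $t$, strictly increasing unless $a=0$. This is the classical monotonicity of $(1+a/t)^t$ (compare $(1+1/n)^n$).

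First we dispose of the degenerate cases. If $1+a/\gamma\le 0$, the left side of \eqref{eq:power} is zero and there is nothing to prove. So assume $1+a/\gamma>0$.

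If $a\ge 0$, then $1+a/t>0$ for all $t>0$. If $a<0$, then $1+a/\gamma>0$ means $\gamma>-a$, and for $t\ge\gamma$ we have $t>-a$, so $1+a/t>0$ as well. Hence on the interval $[\gamma,\eta]$ both sides of \eqref{eq:power} are positive, and it suffices to prove $f(\gamma)\le f(\eta)$.

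Next we differentiate. Put $u=a/t$, so that $u>-1$. Then
\[
f'(t)=\log(1+u)-\frac{u}{1+u}.
\]
Define $g(u)=\log(1+u)-u/(1+u)$ on $(-1,\infty)$. We have $g(0)=0$ and
\[
g'(u)=\frac{1}{1+u}-\frac{1}{(1+u)^2}=\frac{u}{(1+u)^2}.
\]
So $g$ decreases on $(-1,0]$ and increases on $[0,\infty)$, and therefore $g(u)>0$ for $u\ne0$. Equivalently, this is the elementary inequality $\log(1/(1+u))> 1-1/(1+u)$ for $u\neq 0$.

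Consequently $f'(t)>0$ on $[\gamma,\eta]$ whenever $a\ne0$, which gives strict inequality. When $a=0$, both sides of \eqref{eq:power} equal $1$, which settles the equality statement.

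There is no real obstacle here. The only point requiring care is checking that positivity of $1+a/t$ propagates from $t=\gamma$ to larger $t$ when $a<0$, and that was handled above.
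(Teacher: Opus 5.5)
Your argument is correct, but it takes a different route from the paper. The paper avoids calculus. It sets $\lambda=\gamma/\eta\in(0,1)$ and $z=1+a/\gamma>0$, writes $1+a/\eta=(1-\lambda)\cdot 1+\lambda z$, and applies the weighted arithmetic--geometric mean inequality $(1-\lambda)+\lambda z\ge z^\lambda$, then raises both sides to the power $\eta$. The equality case comes from the equality case of AM--GM: $z=1$, i.e.\ $a=0$.

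What that buys is brevity. Positivity of $1+a/\eta$ is automatic, since it is a convex combination of $1$ and $z>0$. So your separate check that $1+a/t$ stays positive on $[\gamma,\eta]$ when $a<0$ is unnecessary, and no differentiation or mean value theorem is needed.

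Your approach is the standard calculus proof, as for $(1+1/n)^n$. It makes the monotonicity of $t\mapsto t\log(1+a/t)$ visible through the explicit derivative $g(a/t)$, at the cost of the positivity case check and a derivative computation. Both proofs rest on the concavity of the logarithm: in the paper it appears as AM--GM, and in yours as the tangent-line inequality behind $g>0$.

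One small slip: your parenthetical reformulation has the wrong sign. $g(u)>0$ is equivalent to $\log(1+u)>1-\frac{1}{1+u}$, i.e.\ $\log v<v-1$ for $v=1/(1+u)\neq 1$, not $\log\bigl(1/(1+u)\bigr)>1-1/(1+u)$. This does not affect the proof, since you established $g>0$ directly from the sign of $g'$.
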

	\begin{proof}
		If $1+a/\gamma\le0$, the left side vanishes. Otherwise, putting
		$\lambda:=\gamma/\eta\in(0,1)$ and $z:=1+a/\gamma>0$, the arithmetic-geometric mean inequality gives
		\[
		1+\frac a\eta=(1-\lambda)+\lambda z\ge z^\lambda.
		\]
		Raising to the power $\eta$ proves \eqref{eq:power}. Equality in the arithmetic-geometric mean inequality requires $z=1$, equivalently $a=0$.
	\end{proof}
	
	For $0<\gamma<1$ and $r>0$, we set
	\[
	K_\gamma(r):=r\pos{1+\frac{r^{-1}-1}{\gamma}}^{\gamma}.
	\]
	The substitution $z=(1-\gamma)r$ yields
	\begin{align}\label{eq:integralk}
		\int_0^\infty\frac{K_\gamma(r)}r\dd r
		&=\frac{1}{\gamma^\gamma}
		\int_0^{1/(1-\gamma)}r^{-\gamma}
		(1-(1-\gamma)r)^\gamma\dd r
		= \frac1{\gamma^\gamma(1-\gamma)^{1-\gamma}} \int_0^1 z^{-\gamma}(1-z)^\gamma\dd z \nonumber\\
		& =\frac{\frac{\pi\gamma}{\sin(\pi\gamma)}}{\gamma^\gamma(1-\gamma)^{1-\gamma}} \,.
	\end{align}
	In the last equality, we first expressed the integral as a beta function and then simplified it using the reflection formula for the gamma function. 
	
	By Lemma~\ref{lem:powerelem}, $\gamma\mapsto K_\gamma(r)$ is nondecreasing for every $r>0$, and strictly increasing for every $0<r<1$. It follows that the left side of \eqref{eq:integralk}, and therefore also the right side, are strictly increasing in $\gamma$.
	
	Moreover, the function $\gamma\mapsto \frac{\frac{\pi\gamma}{\sin(\pi\gamma)}}{\gamma^\gamma(1-\gamma)^{1-\gamma}}$ is continuous on $(0,1)$ and satisfies
	\[
	\lim_{\gamma\downarrow0}\frac{\frac{\pi\gamma}{\sin(\pi\gamma)}}{\gamma^\gamma(1-\gamma)^{1-\gamma}}=1 \,,
	\qquad
	\lim_{\gamma\uparrow1}\frac{\frac{\pi\gamma}{\sin(\pi\gamma)}}{\gamma^\gamma(1-\gamma)^{1-\gamma}}=+\infty.
	\]
	Hence the equation $\gamma^\gamma(1-\gamma)^{1-\gamma}=\frac{\pi\gamma}{4\sin(\pi\gamma)}$ has exactly one solution in $(0,1)$,
	and $\gamma^\gamma(1-\gamma)^{1-\gamma}>\frac{\pi\gamma}{4\sin(\pi\gamma)}$ precisely for $\gamma<\gamma_*$.

	\subsection{Relation to Riemann sum approximations to integrals}\label{sec:rewriting}
	
	We now rewrite the Riesz means and the integral in such a way that the inequalities of interest can be understood as a bound on a Riemann sum by the corresponding integral.
	
	We begin with the Riesz means. The discrete eigenvalues of the operator $-h^2\Delta -2|x|^{-1}$ in $L^2(\R^d)$ are $- 4h^{-2}(2n+d-1)^{-2}$, indexed by $n\in\mathbb N=\{0,1,2,\ldots\}$, of multiplicity 
	$$
	\frac{(n+d-2)!\,(2n+d-1)}{(d-1)!\, n!} \,;
	$$
	see, for example, \cite[Proposition 4.10]{FLW}. Therefore the Riesz means are
	$$
	\Tr(-h^2\Delta - 2 |x|^{-1} + 1)_-^\gamma = \sum_{n=0}^\infty \frac{(n+d-2)!\,(2n+d-1)}{(d-1)!\, n!} \left( h^{-2} 4 (2n+d-1)^{-2} - 1 \right)_+^\gamma \,.
	$$
	For $d=2$, this becomes
	\begin{equation}
		\label{eq:trace2d}
		\Tr(-h^2\Delta - 2 |x|^{-1} + 1)_-^\gamma = 2h^{-1}  \sum_{n=0}^\infty (h(n+\tfrac12))^{1-2\gamma} \left( 1 - h^{2} (n+\tfrac12)^{2} \right)_+^\gamma\,,
	\end{equation}
	and for $d=3$,
	\begin{equation}
		\label{eq:trace3d}
		\Tr(-h^2\Delta - 2 |x|^{-1} + 1)_-^\gamma = h^{-2} \sum_{n=0}^\infty (h(n+1))^{2-2\gamma} \left( 1- h^{2} (n+1)^{2} \right)_+^\gamma \,.
	\end{equation}
	
	Next, we rewrite the phase-space integral as
	\begin{equation}
		\label{eq:phasespacealt}
		\iint_{\mathbb R^d\times\mathbb R^d}\Big(h^2|\xi|^2-\tfrac{2}{|x|}+1\Big)_-^{\gamma}\frac{\dd x\dd\xi}{(2\pi)^d}
		=\frac{2}{(d-1)!\,h^d}\int_0^1 t^{d-1-2\gamma}(1-t^2)^\gamma\dd t \,.
	\end{equation}
	This can be justified by writing both sides in terms of gamma functions. For the sake of completeness, let us give a short somewhat more conceptual proof, which shows that
	\[
	\iint_{\mathbb R^d\times\mathbb R^d} f\Big(h^2|\xi|^2-\tfrac{2}{|x|}\Big) \, \frac{\dd x\dd\xi}{(2\pi)^d}
	=\frac{2}{(d-1)!\,h^d}\int_0^\infty t^{d-1} f(-t^{-2}) \dd t\,,
	\]
	for any nonnegative measurable function $f$ supported on $(-\infty,0]$. Indeed, we change variables 
	\[
	(x,\xi)\mapsto (t,\omega,\eta) := \left( ( 2|x|^{-1} - h^2|\xi|^2)^{-1/2} \,,\ x/|x| \,,\ ( 2|x|^{-1} - h^2|\xi|^2)^{-1/2} h \xi \right)\,,
	\]
	with inverse
	\[
	(t,\omega,\eta) \mapsto (x,\xi) := \left( \frac{2t^2}{1+|\eta|^2} \, \omega \,,\ \frac{\eta}{ht} \right).
	\]
	This is a bijection between $\{ (x,\xi):\ 2|x|^{-1}>h^2|\xi|^2\}$ and $(0,\infty)\times\mathbb S^{d-1}\times\R^d$. We compute
	\[
	\dd x \dd\xi =h^{-d} 2^{d+1}\,t^{d-1}\dd t \dd \omega \frac{\dd\eta}{(1+|\eta|^2)^{d+1}}
	\]
	and deduce that
	\[
	\iint_{\mathbb R^d\times\mathbb R^d} f\Big(h^2|\xi|^2-\tfrac{2}{|x|}\Big) \, \frac{\dd x\dd\xi}{(2\pi)^d} = \frac{2^{d+1} |\mathbb S^{d-1}|}{(2\pi h)^{d}} \int_0^\infty f(-t^{-2}) \,t^{d-1}\dd t \int_{\R^d} \frac{\dd\eta}{(1+|\eta|^2)^{d+1}} \,.
	\]
	Finally, we use
	\[
	\frac{2^{d} |\mathbb S^{d-1}|}{(2\pi)^{d}} \int_{\R^d} \frac{\dd\eta}{(1+|\eta|^2)^{d+1}} = \frac{1}{(d-1)!}\,,
	\]
	to arrive at the claimed formula.
	
	
	\subsection{Proof of part (b) in Theorem \ref{main2d}}\label{sec:below2}
	
	Let $d=2$ and set
	\begin{align*}
		S_\gamma(h)&:=h\sum_{n=0}^{\infty}
		\bigl(h(n+\tfrac12)\bigr)^{1-2\gamma}
		\pos{1-h^2(n+\tfrac12)^2}^{\gamma} \,.
	\end{align*}
	Our goal is to prove that if the inequality
	\begin{equation}
		\label{eq:goal2dsmall}
		S_\gamma(h) \leq 2 \, \gamma^\gamma(1-\gamma)^{1-\gamma}
	\end{equation}
	holds for $\gamma=\gamma_*$ with equality precisely for $h=2\sqrt{1-\gamma_*}$, then the corresponding inequality holds for all $0<\gamma<\gamma_*$ with equality precisely for $h=2\sqrt{1-\gamma}$. In view of \eqref{eq:trace2d}, \eqref{eq:phasespacealt} and a computation of the corresponding integral, inequality \eqref{eq:goal2dsmall} establishes the inequality in part (b) of Theorem \ref{main2d} for $0<\gamma<\gamma_*$ under the assumption that the inequality holds for $\gamma=\gamma_*$.
	
	The key in proving \eqref{eq:goal2dsmall} is the observation that for every $0<\gamma<\eta<1$ and $h>0$, we have
	\begin{equation}\label{eq:comparison}
		\frac{S_\gamma(h)}{\gamma^\gamma(1-\gamma)^{1-\gamma}}
		\le\frac{S_\eta\!\left(h\sqrt{\frac{1-\eta}{1-\gamma}}\right)}{\eta^\eta(1-\eta)^{1-\eta}} \,.
	\end{equation}
	To see this, we change variables $k=2n+1$ to find
	\[
	S_\gamma(h)
	=2(h^2/4)^{1-\gamma}\sum_{\substack{k\ge1\\k\ \mathrm{odd}}}
	k\pos{k^{-2}-h^2/4}^{\gamma} \,.
	\]
	Putting $h^2/4=(1-\gamma)x$ and dividing by $\gamma^\gamma(1-\gamma)^{1-\gamma}$, we obtain
	\begin{equation}\label{eq:scaled}
		\frac{S_\gamma(2\sqrt{(1-\gamma)x})}{\gamma^\gamma(1-\gamma)^{1-\gamma}}
		=2x\sum_{\substack{k\ge1\\k\ \mathrm{odd}}}
		k\pos{1+\frac{k^{-2}x^{-1}-1}{\gamma}}^{\gamma}.
	\end{equation}
	Applying Lemma~\ref{lem:powerelem} to each term in the sum, we obtain \eqref{eq:comparison}.
	
	To prove \eqref{eq:goal2dsmall} for $0<\gamma<\gamma_*$ we apply \eqref{eq:comparison} with $\eta=\gamma_*$ and find
	\begin{equation}\label{eq:below2proof}
	\frac{S_\gamma(h)}{\gamma^\gamma(1-\gamma)^{1-\gamma}}
	\le\frac{S_{\gamma_*}\!\left(h\sqrt{\frac{1-\gamma_*}{1-\gamma}}\right)}{\gamma_*^{\gamma_*}(1-\gamma_*)^{1-{\gamma_*}}} \,.
	\end{equation}
	By the assumed inequality for $\gamma_*$, the right side is $\leq 2$. This proves \eqref{eq:goal2dsmall}.
	
	Finally, let us discuss the cases of equality in \eqref{eq:goal2dsmall} for $0<\gamma<\gamma_*$. Clearly, equality holds for $h=2\sqrt{1-\gamma}$ (where only one term contributes to the sum). Conversely, if equality holds in \eqref{eq:goal2dsmall} for some $h>0$ and some $0<\gamma<\gamma_*$, then inequality \eqref{eq:below2proof} implies that $S_{\gamma_*}\!\left(h\sqrt{\frac{1-\gamma_*}{1-\gamma}}\right) = 2 \,\gamma_*^{\gamma_*}(1-\gamma_*)^{1-{\gamma_*}}$. Since we are working under the assumption that equality in \eqref{eq:goal2dsmall} for $\gamma=\gamma_*$ holds only at $2\sqrt{1-\gamma_*}$, we deduce that
	\[
	h\sqrt{\frac{1-\gamma_*}{1-\gamma}} = 2\sqrt{1-\gamma_*} \,,
	\]
	that is, $h = 2\sqrt{1-\gamma}$. This proves the assertion.
	\qed

	\subsection{Reduction to two theorems about Riemann sums}\label{sec:reduction}
	
	The discussion so far in this section has reduced the proof of Theorems \ref{main3d} and \ref{main2d} to the proof of the semiclassical inequality for $\gamma=2/3$ if $d=3$ and $\gamma=\gamma_*$ if $d=2$. Indeed, in Subsection \ref{sec:above} we have reduced the case of $\gamma$ larger than the critical exponent to the case of the critical exponent, and in Subsections \ref{sec:below3} (for $d=3$) and \ref{sec:below2} (for $d=2$) we have reduced the case of $\gamma$ smaller than the critical exponent to the case of the critical exponent.
	
	Thus, what remains to be proved is the inequality at the critical exponent. More precisely, what we want to prove are the inequalities for $d=3$
	\begin{equation}
		\label{eq:goal3d}
		\Tr(-h^2\Delta- 2\,|x|^{-1} + 1)_-^{2/3} 
		< \iint_{\R^3\times\R^3} \left( h^2 |\xi|^2 - 2\,|x|^{-1} + 1 \right)_-^{2/3} \,\frac{\dd x \dd \xi}{(2\pi)^3} \,,
	\end{equation}
	and for $d=2$
	\begin{equation}
		\label{eq:goal2d}
		\Tr(-h^2\Delta- 2\,|x|^{-1} + 1)_-^{\gamma_*} 
		\leq \iint_{\R^2\times\R^2} \left( h^2 |\xi|^2 - 2\,|x|^{-1} + 1 \right)_-^{\gamma_*} \,\frac{\dd x \dd \xi}{(2\pi)^2} \,,
	\end{equation}
	with equality in \eqref{eq:goal2d} if and only if $h = 2\sqrt{1-\gamma_*}$.
	
	In view of \eqref{eq:trace2d}, \eqref{eq:trace3d} and \eqref{eq:phasespacealt}, these two inequalities are equivalent to the inequalities
	\begin{equation}
		\label{eq:goal3d2}
		h \sum_{n=0}^\infty (h(n+1))^{2/3} \left( 1- h^{2} (n+1)^{2} \right)_+^{2/3}
		< \int_0^1 t^{2/3}(1-t^2)^{2/3} \dd t
	\end{equation}
	and
	\begin{equation}
		\label{eq:goal2d2}
		h  \sum_{n=0}^\infty (h(n+\tfrac12))^{1-2\gamma_*} \left( 1 - h^{2} (n+\tfrac12)^{2} \right)_+^{\gamma_*}
		\leq \int_0^1 t^{1-2\gamma_*}(1-t^2)^{\gamma_*} \dd t \,,
	\end{equation}
	respectively, with equality in \eqref{eq:goal2d2} if and only if $h = 2\sqrt{1-\gamma_*}$.
	
	The following two theorems, which are the technical main results of this work, imply that the latter two inequalities are true. Consequently, their proof also finishes the proof of Theorems \ref{main3d} and \ref{main2d}.
	
	\begin{theorem}\label{thm:endpoint}
		For every $h>0\,$, we have the strict inequality
		\begin{equation}\label{eq:endpoint-main}
			h\sum_{n=0}^{\infty}(h(n+1))^{2/3}
			\bigl(1-h^2(n+1)^2\bigr)_+^{2/3}
			< \int_0^1 t^{2/3}(1-t^2)^{2/3}\dd t \,.
		\end{equation}
	\end{theorem}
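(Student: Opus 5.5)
Write $f(t):=t^{2/3}(1-t^2)_+^{2/3}$, so that \eqref{eq:endpoint-main} says $R(h):=h\sum_{k\ge1} f(hk)<\int_0^1 f$ for every $h>0$. This is a right-endpoint Riemann sum of a function vanishing at $t=0$ and $t=1$. The plan is to represent the error $R(h)-\int_0^1 f$ exactly by an Abel--Plana-type formula and to show that the resulting expression is negative.

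\textbf{Step 1: the representation.} Set $N=1/h$ and $g(s):=f(s/N)$. The sum $\sum_{k\ge1} g(k)$ is over integers in $(0,N)$. The function $g$ extends holomorphically to $\re s>0$ with branch cuts starting at $s=N$. I would apply the argument principle with the kernel $\pi\cot(\pi s)$ on a contour that encircles $(0,N)$ and is deformed onto the imaginary axis and around the cut at $s=N$, with the cut taken along $[N,\infty)$. Since $f(t)\sim t^{2/3}$ at $0$ and $f(t)\sim 2^{2/3}(1-t)^{2/3}$ at $1$, both endpoint contributions converge. The expected result has the form
\[
R(h)-\int_0^1 f = -\,h\int_0^\infty \frac{2\,\Ima f(iy/N)}{e^{2\pi y}-1}\,\dd y \;+\; h\int_{N}^\infty \frac{\text{jump of } g \text{ across the cut}}{e^{2\pi i s}-1}\,\dd s \,\Big|_{\text{suitably rotated}} .
\]
This is what Lemma~\ref{lem:representation3} presumably provides. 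A cleaner variant keeps the periodic part explicit: at $t=1$ one gets $\sum_j c\, h^{5/3}\,\re\big(e^{2\pi i jN}/(2\pi i j)^{5/3}\big)$-type terms, namely the $\sigma_\gamma$-oscillation from Subsection~\ref{sec:below3}, plus a remainder. At $t=0$ one has $\Ima (iy)^{2/3}=\sin(\pi/3)\,y^{2/3}>0$, which gives a negative term of exact size $-c_0 h^{5/3}$ with $c_0=2\sin(\pi/3)\,\Gamma(5/3)\zeta(5/3)/(2\pi)^{5/3}$.

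\textbf{Step 2: the critical balance.} The exponent $2/3$ is critical because the negative contribution from $t=0$ and the oscillating contribution from $t=1$ are of the same order $h^{5/3}$. (Generally the $t=0$ term is of order $h^{2-2\gamma+1}$ and the $t=1$ term of order $h^{1+\gamma}$, and these agree exactly when $\gamma=2/3$.) For $\gamma<2/3$ the oscillation wins, consistent with part (b). The core inequality is therefore that $c_0$ strictly exceeds $\max$ of the leading periodic function from the endpoint $t=1$, which is $2^{2/3}\Gamma(5/3)$ times a Clausen-type series $\sum_j j^{-5/3}\cos(2\pi jN - 5\pi/6)/(2\pi)^{5/3}$. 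This maximum can be bounded crudely by $\zeta(5/3)$ times the prefactor. The comparison reduces to something like $2\sin(\pi/3)=\sqrt3$ versus $2^{2/3}\approx1.587$, which holds strictly, but with little room. One must also handle the exact (non-asymptotic) remainders coming from the nonlinear factor $(1+t)^{2/3}$ near $t=1$ and $(1-t^2)^{2/3}$ near $0$. I would estimate these in the integral representation by showing that the remainder integrands have a definite sign, or are small, using elementary inequalities for $\re$ and $\Ima$ of $(1\pm iy)^{2/3}$.

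\textbf{Step 3: small and large $h$.} For $h\ge1$ the sum is empty and the inequality is trivial. For $h$ close to $1$ only a few terms appear, and I would check these directly, with finitely many rational numerical bounds as the paper indicates. For small $h$ the representation from Step 2 applies uniformly.

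\textbf{Main obstacle.} The hard step is making Step 2 uniform in $h$. The margin is only $\sqrt3$ versus $2^{2/3}$, so the error terms from the non-power behaviour of $f$ must be controlled sharply and not merely asymptotically. I would first try to keep the exact kernels $1/(e^{2\pi y}-1)$, bound the oscillatory endpoint term by its absolute value, and prove that the total integrand is pointwise dominated by the positive contribution from $t=0$.
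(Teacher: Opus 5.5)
There is a genuine gap, and it sits at the decisive comparison in Step 2: you have lost a factor of $2$ in the contribution of the endpoint $t=1$. Both endpoints enter with the same prefactor $2\re$. In Poisson summation the error is $\sum_{m\neq0}\hat f(m/h)=2\re\sum_{m\ge1}\hat f(m/h)$. Equivalently, expand $(e^{ay-ia}-1)^{-1}=\sum_{j\ge1}e^{ija}e^{-jay}$ in Lemma~\ref{lem:representation3}, with $a=2\pi/h$. The correct leading behaviour is
\[
h\sum_{k\ge1}f(hk)-\int_0^1 f=\frac{\Gamma(5/3)\,h^{5/3}}{(2\pi)^{5/3}}\Big[-\sqrt3\,\zeta(5/3)+2^{5/3}\sum_{j\ge1}j^{-5/3}\cos\bigl(2\pi jN-\tfrac{5\pi}{6}\bigr)\Big]+o(h^{5/3}) .
\]
You kept the $2$ in $c_0$ (as $2\sin(\pi/3)=\sqrt3$), but the oscillating term carries $2\cdot2^{2/3}=2^{5/3}\approx3.17$, not $2^{2/3}$.

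Bounding the Clausen-type series by $\zeta(5/3)$ therefore pits $2^{5/3}$ against $\sqrt3\approx1.73$, and fails by a wide margin. Your fallback fails in the same way: bounding the oscillatory integrand by its modulus via $|e^{ay-ia}-1|^{-1}\le(e^{ay}-1)^{-1}$ produces $2^{5/3}M_{2/3}(a)$ against $\sqrt3\,M_{2/3}(a)$. This is a failure at leading order for every small $h$, so no amount of care with the remainder terms can repair it.

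The missing idea is to use the phase of the kernel, not just its modulus. The paper uses the exact identity $\bigl|(re^{-i\theta}-1)^{-1}-(r^2-1)^{-1}\bigr|=r/(r^2-1)$ with $r=e^{ay}$ and $\theta=a$. Thus, whatever $a$ is, $(e^{ay-ia}-1)^{-1}$ lies on a circle with center $(e^{2ay}-1)^{-1}$ and radius $(2\sinh(ay))^{-1}$.

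The center contributes with its sign, through $2\Ima F(1+iy)/(e^{2ay}-1)$, whose leading part $-\sqrt3\,2^{2/3}y^{2/3}$ is negative. Only the radius part $|F(1+iy)|/\sinh(ay)$ is bounded in modulus. Using $M_q(2a)=2^{-q-1}M_q(a)$ and $N_q(a)=2(1-2^{-q-1})M_q(a)$, the coefficient of $M_{2/3}(a)$ becomes $-(1+\frac{3\sqrt3}{2}-2^{5/3})\approx-0.42<0$.

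The corrections involving $M_{5/3}$ and $M_{8/3}$ come from the explicit bounds $|F(1+iy)|\le2^{2/3}y^{2/3}(1+\frac5{12}y^2)$ and $2\Ima F(1+iy)\le2^{2/3}y^{2/3}(-\sqrt3+y+y^2)$. They are small once $a\ge2\pi$, so this handles all $0<h\le1$ at once, and no separate finite check near $h=1$ is needed.
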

	
	\begin{theorem}\label{thm:threshold}
		For every $h>0$, we have
		\begin{equation}\label{eq:iff}
			h  \sum_{n=0}^\infty (h(n+\tfrac12))^{1-2\gamma_*} \left( 1 - h^{2} (n+\tfrac12)^{2} \right)_+^{\gamma_*}
			\leq \int_0^1 t^{1-2\gamma_*}(1-t^2)^{\gamma_*} \dd t \,.
		\end{equation}
		Equality holds if and only if $h=2\sqrt{1-\gamma_*}\,$.
	\end{theorem}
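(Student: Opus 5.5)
The plan is to view the left side of \eqref{eq:iff} as a midpoint Riemann sum. Set $f(t):=t^{1-2\g}(1-t^2)_+^{\g}$ on $(0,\infty)$. Then \eqref{eq:iff} reads $D(h):=h\sum_{n\ge0}f(h(n+\tfrac12))-\int_0^\infty f\dd t\le 0$. I would split into three regimes of $h$.

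\emph{Large $h$.} For $h\ge2$ the sum is empty, so $D(h)<0$. For $2/3\le h<2$ only the term $n=0$ survives. With $h^2/4=(1-\g)x$, formula \eqref{eq:scaled} gives
\[
\frac{h f(h/2)}{\g^{\g}(1-\g)^{1-\g}}=2K_{\g}(x).
\]
By \eqref{eq:integralk} and the defining equation \eqref{eq:root}, $\int_0^1 f\dd t=2\g^{\g}(1-\g)^{1-\g}$. So here the inequality reduces to $K_{\g}(x)\le1$. This holds because $K_\g(r)=r(1+(r^{-1}-1)/\g)_+^\g\le r\cdot r^{-1}=1$ by Lemma~\ref{lem:powerelem} with $\eta=1$, and equality holds if and only if $x=1$, that is $h=2\sqrt{1-\g}$. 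So the single-eigenvalue regime is exactly where equality occurs, which is consistent with how $\g$ was defined.

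\emph{Small $h$.} For $h<2/3$ at least two terms contribute, and I would use an Abel--Plana-type representation (Lemma~\ref{lem:representation2}) for the midpoint rule. Continue $f$ analytically to $t=\pm iy$, with branch cuts at $0$ and along $[1,\infty)$. Then $D(h)$ should be written as
\[
D(h)=-\int_0^\infty \frac{\Ima\bigl[f(iy)-f(-iy)\bigr]}{e^{2\pi y/h}+1}\dd y
\]
plus a correction coming from the cut at $t=1$, where $(1-t^2)^{\g}$ changes phase. Equivalently, I would decompose $f$ near $0$ and near $1$. The endpoint $0$ gives the main term
\[
h^{2-2\g}\,\zeta\bigl(2\g-1,\tfrac12\bigr)=h^{2-2\g}\,(2^{2\g-1}-1)\,\zeta(2\g-1).
\]
This is strictly negative because $0<2\g-1<1$, so $\zeta(2\g-1)<0$. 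The endpoint $1$ gives only an oscillating term of order $h^{1+\g}$, which is of higher order. Hence $D(h)<0$ for $h$ below some explicit $h_0$, and I would make $h_0$ quantitative by bounding the remainder integrals analytically.

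\emph{Intermediate $h$.} The main obstacle is the window $h_0\le h<2/3$, with a bounded but moderate number of eigenvalues, where neither the asymptotics nor the one-term computation is sharp. In this window $D(h)$ is a finite sum of explicit power functions on each interval $h\in[1/(m+\tfrac12)\cdot\tfrac12\cdot 2,\dots)$, that is, on each interval where the number $m$ of contributing terms is fixed. I would first try to push the integral representation far enough, with explicit error bounds on the kernel $1/(e^{2\pi y/h}+1)$, that $h_0$ reaches $2/3$. Failing that, on each of the finitely many remaining intervals I would bound the sum termwise by monotonicity or concavity of $t\mapsto f(t)$ and compare with the numerical value of $\int_0^1 f$ via elementary rational estimates. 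Strictness away from $h=2\sqrt{1-\g}$ comes out of each regime.
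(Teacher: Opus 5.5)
Your treatment of $h\ge 2/3$ is correct and essentially matches the paper's. The sum is empty for $h\ge2$. For $2/3\le h<2$, your reduction to $K_{\g}(x)\le1$ via Lemma~\ref{lem:powerelem} with $\eta=1$ is a tidy replacement for the paper's direct maximization. For the ``only if'' you should still check that $2\sqrt{1-\g}\in[2/3,2)$, i.e.\ $\g\le 8/9$.

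\textbf{The gap is the range $h<2/3$}, which carries all of the difficulty. There you only give an asymptotic argument: as $h\to0$, the negative endpoint-$0$ term $h^{2-2\g}(2^{2\g-1}-1)\zeta(2\g-1)$ beats an $O(h^{1+\g})$ oscillating endpoint-$1$ term. (That term does agree with the leading part of the paper's $2\cos(\pi\g)L_{1-2\g}(a)$.) This yields $D(h)<0$ only below some unspecified $h_0$. You say you would bound the remainder integrals analytically, but you give no mechanism, so neither $h_0$ nor the intermediate window is controlled.

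The fallback does not work as stated. Since $f(t)=t^{1-2\g}(1-t^2)_+^{\g}$ is decreasing, termwise monotonicity only gives $h\sum_n f(h(n+\tfrac12))\le\int_0^1f+\int_0^{h/2}f$, which is the wrong direction. Moreover, $f$ is convex near $0$ but concave with infinite slope at $t=1$. Any cellwise comparison must therefore trade the gain of order $h^{2-2\g}$ at $0$ against an oscillating contribution of order $h^{1+\g}$ at $1$, uniformly in continuous $h$. That is the original problem.

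Your displayed formula also has the wrong sign. The endpoint-$0$ term is $+\int_0^\infty\operatorname{Im}[f(iy)-f(-iy)]\,(e^{2\pi y/h}+1)^{-1}\dd y=2\cos(\pi\g)\int_0^\infty y^{1-2\g}(1+y^2)^{\g}(e^{2\pi y/h}+1)^{-1}\dd y<0$.

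\textbf{The missing idea} is an explicit bound on the endpoint-$1$ term $-2\operatorname{Im}\int_0^\infty F(1+iy)\,(e^{ay-ia}+1)^{-1}\dd y$, $a=2\pi/h$, that is uniform in the phase $e^{-ia}$. This phase oscillates with $h$ and makes the kernel singular at $y=0$ whenever $1\in h(\mathbb Z+\tfrac12)$. The paper uses the identity $|(re^{-i\theta}-1)^{-1}-(r^2-1)^{-1}|=r/(r^2-1)$ for $r>1$, applied with $\theta+\pi$ in place of $\theta$. This bounds the integrand by $2\operatorname{Im}F(1+iy)/(e^{2ay}-1)+|F(1+iy)|/\sinh(ay)$ for every $a$.

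Two explicit bounds on $F$ are then used:
\begin{itemize}
\item $|F(1+iy)|\le2^{\g}y^{\g}(1+y^2/12)$;
\item a second-order Taylor bound on $2\operatorname{Im}F(1+iy)$ whose leading coefficient $-2^{1+\g}\sin(\pi\g/2)$ is negative.
\end{itemize}
With these, everything reduces to closed forms:
\begin{itemize}
\item $M_q(a)=\Gamma(q+1)\zeta(q+1)a^{-q-1}$;
\item $N_q(a)=2(1-2^{-q-1})M_q(a)$;
\item $L_q(a)=\Gamma(q+1)\eta(q+1)a^{-q-1}$.
\end{itemize}
Elementary numerics then show the resulting bound is negative for all $a\ge3\pi$, i.e.\ all $h\le2/3$.

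So there is in fact no intermediate regime. But an estimate of this kind is exactly what your proposal needs and does not supply. As it stands, it proves the theorem only for $h\ge2/3$, and at best for $h$ sufficiently small.
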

	
The remainder of this paper consists of two sections, devoted to the proofs of Theorems \ref{thm:endpoint} and \ref{thm:threshold}, respectively.
	

\section{Proof of Theorem~\ref{thm:endpoint}}\label{sec:thm3dproof}

Let us introduce, for $h>0\,$,  
\begin{equation}\label{eq:defE}
E(h) := h\sum_{n=1}^{\infty}(hn)^{2/3}(1-h^2n^2)_+^{2/3} - \int_0^1t^{2/3}(1-t^2)^{2/3}\dd t \,.
\end{equation}
Theorem 4 is equivalent to the assertion that \(E(h)<0\) for all \(h>0\). We shall prove the following quantitative version of this negativity.

\begin{proposition}\label{prop:quant3}
	For all $0<h\leq 1$, we have
	\begin{equation}\label{quant}
		E(h)< -\frac{\Gamma(5/3) \, \zeta(5/3)}{10 \, (2\pi)^{5/3}} \, h^{5/3} \,.
	\end{equation}
\end{proposition}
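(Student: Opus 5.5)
The plan is to use Poisson summation, as suggested by the Abel--Plana-type representation announced in the introduction, and to compute the $h^{5/3}$ term of $E(h)$ exactly. Let $g(t):=|t|^{2/3}(1-t^2)_+^{2/3}$ on $\R$. Since $g(0)=0$ and $g$ is even,
\[
h\sum_{n\ge1} g(hn)=\tfrac12\, h\sum_{n\in\mathbb Z} g(hn) \qquad\text{and}\qquad \int_\R g=2\int_0^1 t^{2/3}(1-t^2)^{2/3}\dd t .
\]
Because $\hat g(\omega)=O(|\omega|^{-5/3})$, the Poisson sum converges absolutely, and
\[
E(h)=\sum_{k\ge1}\hat g(2\pi k/h),\qquad \hat g(\omega):=\int_\R g(t)\cos(\omega t)\dd t .
\]
I would turn this into an exact formula by deforming the contour in $\hat g(\omega)$ into the complex plane. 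One rotates the integration near $t=0$ and near $t=\pm1$ onto vertical rays; this is the Abel--Plana step. The result writes $\hat g(\omega)$ as a sum of two Laplace-type integrals, one attached to each type of singularity, with explicit kernels.

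The next step is to isolate the leading behaviour of each piece. The singularity $|t|^{2/3}$ at the origin contributes $-2\Gamma(5/3)\sin(\pi/3)\,\omega^{-5/3}$ up to a Laplace integral that decays faster. The singularities $(1\mp t)^{2/3}$ at $t=\pm1$ contribute $2^{5/3}\Gamma(5/3)\,\omega^{-5/3}\cos(\omega-5\pi/6)$ (phase to be checked), again plus a faster-decaying correction. After substituting $\omega=2\pi k/h$ and summing in $k$, this gives
\[
E(h)=\frac{\Gamma(5/3)\,h^{5/3}}{(2\pi)^{5/3}}\,\Phi(1/h)+R(h),\qquad
\Phi(s):=-\sqrt3\,\zeta(5/3)+2^{5/3}\sum_{k\ge1}k^{-5/3}\cos(2\pi k s-5\pi/6).
\]
Here $\Phi$ is $1$-periodic. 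This is exactly where $\gamma=2/3$ is critical: the origin term (order $h^{3-2\gamma}$) and the endpoint term (order $h^{1+\gamma}$) have the same order.

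The heart of the proof, and the step I expect to be the main obstacle, is to show $\max_s\Phi(s)\le-\zeta(5/3)/10-\delta$ for some margin $\delta>0$ that absorbs $R$. The crude bound $|\cos|\le1$ is useless, since $2^{5/3}>\sqrt3$. Instead I would identify the periodic part as a Hurwitz/periodic-zeta function of $s$, using that
\[
\sum_{k\ge1}k^{-5/3}e^{2\pi iks}
\]
is a Lerch-type function with an integral representation. I would then show that its real part against the given phase is bounded above by roughly $\sqrt3\,\zeta(5/3)/2^{5/3}$ minus a margin. Doing this needs an analytic estimate of that integral representation together with a few rational numerical checks on a grid of $s$, with monotonicity or Lipschitz control between grid points. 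Should this fail, the fallback is to keep the sum over $n$ exact near the endpoint, i.e.\ to work with the Abel--Plana formula in the variable $n$ directly, so that the endpoint piece appears as a single explicit integral whose sign can be controlled.

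Finally, the remainder $R(h)$ collects the faster-decaying Laplace integrals, which are of order $h^{8/3}$ or smaller, summed over $k$. I would bound it by $c\,h^{8/3}$ with an explicit constant $c$ for $0<h\le1$; one then checks that this is at most $\delta\,\Gamma(5/3)(2\pi)^{-5/3}h^{5/3}$ for $h\le1$. If this fails near $h=1$, the few values $h\in[h_0,1]$ in question involve only a handful of terms $n\le 1/h$, and I would verify \eqref{quant} there directly by elementary numerical bounds.
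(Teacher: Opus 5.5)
Your setup is correct and is essentially the paper's. Summing your contour-rotated $\hat g(2\pi k/h)$ over $k\ge1$ turns $\sum_k e^{-kay}$ and $\sum_k e^{ika-kay}$ (with $a=2\pi/h$) into $1/(e^{ay}-1)$ and $1/(e^{ay-ia}-1)$, which is exactly Lemma~\ref{lem:representation3}. Your $\Phi$, including the phase $-5\pi/6$, and your explanation of why $\gamma=2/3$ is critical are also right.

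\textbf{The gap.} The step you flag as the main obstacle is where the whole proof lives, and you give no argument for it, only a plan for grid numerics. That plan has a concrete defect. Near $s=0$, the expansion of $\sum_k k^{-5/3}e^{2\pi i k s}$ contains the term $\Gamma(-2/3)(-2\pi i s)^{2/3}$. Its real part against $e^{-5i\pi/6}$ is nonzero for $s>0$, so $\Phi$ has an $s^{2/3}$ cusp at every integer and is not Lipschitz. Lipschitz interpolation between grid points therefore fails there, and evaluating $\Phi$ rigorously at grid points would also require controlling a slowly convergent series. Your fallback, Abel--Plana in $n$, does not help by itself: $\operatorname{Re}\frac{-iF(1+iy)}{e^{ay-ia}-1}$ has no sign. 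What is needed is an upper bound that is uniform in the phase $a$.

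\textbf{The missing idea.} For $r>1$, the map $\theta\mapsto 1/(re^{-i\theta}-1)$ parametrizes the circle with center $1/(r^2-1)$ and radius $r/(r^2-1)$. Taking $r=e^{ay}$ gives, pointwise in $y$ and for every $a$,
\[
2\operatorname{Re}\frac{-iF(1+iy)}{e^{ay-ia}-1}\le \frac{2\operatorname{Im}F(1+iy)}{e^{2ay}-1}+\frac{|F(1+iy)|}{\sinh(ay)} \,.
\]
Insert the leading term $F(1+iy)\approx 2^{2/3}e^{-i\pi/3}y^{2/3}$ and use $M_q(2a)=2^{-q-1}M_q(a)$ and $N_q=2(1-2^{-q-1})M_q$. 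This yields at once, with no numerics,
\[
\max_s\Phi(s)\le \bigl(2^{5/3}-1-\tfrac{3\sqrt3}{2}\bigr)\zeta(5/3)<-\tfrac{7}{20}\,\zeta(5/3) \,.
\]

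\textbf{Handling the remainder.} Applying the circle bound to the full $F(1+iy)$, not just its leading term, also handles your remainder cleanly. The bound $|F(1+iy)|\le 2^{2/3}y^{2/3}(1+\tfrac5{12}y^2)$ has no linear correction. Hence the $O(h^{8/3})$ correction enters only as $\tfrac14 M_{5/3}(a)$, and the estimate closes for all $a\ge 2\pi$, i.e.\ all $h\le1$.

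If instead you bound $R(h)$ by absolute values, the linear term of $F(1+iy)-2^{2/3}e^{-i\pi/3}y^{2/3}$ costs about $2^{5/3}\Gamma(8/3)\zeta(8/3)\,a^{-8/3}\approx 6\,a^{-8/3}$. Even with the margin from the circle bound, this suffices only for $h\lesssim 0.57$. You would then need your direct check on $(1/2,1]$, where only $n=1$ contributes. That check does work: there $E(h)\le \max_h h^{5/3}(1-h^2)^{2/3}-\int_0^1 t^{2/3}(1-t^2)^{2/3}\dd t\approx 0.357-0.383<-0.009$.
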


Numerically, we find $\frac{\Gamma(5/3) \, \zeta(5/3)}{10 \, (2\pi)^{5/3}} \approx 0.008960$. We do not expect this bound to be close to optimal. Our results from \cite{FHN}, however, show that the order $h^{5/3}$ is optimal as $h\to 0\,$.

Clearly, Proposition \ref{prop:quant3} implies Theorem \ref{thm:endpoint} in the range $0<h\leq 1$. Since all summands in the sum in the definition of $E(h)$ vanish for \(h\geq1\) while the integral is positive, we have $E(h)<0$ for $h\geq 1$. Thus, Theorem \ref{thm:endpoint} follows, once Proposition \ref{prop:quant3} has been proved.

The proof of Proposition \ref{prop:quant3} relies on an integral representation formula for $E(h)$ of Abel--Plana type. For background and connections with other summation formulas, we refer to \cite{BFSS}. Finite-interval versions of the Abel–Plana formula, which is what we need, have also been used in theoretical physics studies of the Casimir effect; see \cite[Section 2]{Sa} or \cite[Appendix B]{INS}. We give a self-contained derivation of the formulas needed here.

It will be convenient to parametrize $h=2\pi/a$ and work with the variable $a$. The representation formula involves the function
\begin{equation}\label{eq:defF}
F(z):=z^\frac23 (1-z^2)^\frac23,
\end{equation}
which is holomorphic on the strip $0 < \re z <1\,$.

\begin{lemma}\label{lem:representation3}
	For all $a>0$, we have, with $E$ defined in \eqref{eq:defE} and $F$ defined in \eqref{eq:defF}, 
	\[
	E(2\pi/ a) =-\sqrt3\int_0^\infty\frac{y^{2/3}(1+y^2)^{2/3}}{e^{ay}-1}\dd y
	+2\re\int_0^\infty\frac{-iF(1+iy)}{e^{ay-ia}-1}\dd y\,.	
	\]
\end{lemma}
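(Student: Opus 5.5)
The plan is to derive the formula by a contour-integral (Abel--Plana) argument applied to $F$ on the strip $0<\re z<1$. Set $h=2\pi/a$. The nodes $hn$ with $0<hn<1$ are exactly the poles in the open strip of $\frac12\cot(az/2)$, each with residue $1/a=h/(2\pi)$. Since $F(z)=z^{2/3}(1-z^2)^{2/3}$ is holomorphic on $0<\re z<1$ and continuous up to the boundary with $F(0)=F(1)=0$, the residue theorem gives
\[
h\sum_{n\ge1}F(hn)=\frac1{2i}\oint_{\partial R}F(z)\cot(az/2)\dd z
\]
for the rectangle $R=\{0<\re z<1,\ |\im z|<Y\}$. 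Here I use that $(hn)^{2/3}(1-h^2n^2)_+^{2/3}=F(hn)$ for $0<hn<1$, and that a node $hn=1$ contributes nothing anyway.

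Next I split the contour into its upper and lower halves and the kernel accordingly. In the upper half plane I write $\cot(az/2)=-i-\frac{2i}{e^{-iaz}-1}$, and in the lower half plane $\cot(az/2)=i+\frac{2i}{e^{iaz}-1}$. The constant parts $\mp i$ contribute $\frac1{2i}\bigl(\int_{\text{upper}}(-i)F+\int_{\text{lower}}iF\bigr)$. By Cauchy's theorem applied to $F$ alone, each half-contour integral of $F$ equals an integral along $[0,1]$. Combined, these produce exactly $\int_0^1F(t)\dd t$, which is the integral subtracted in $E$. The remaining exponential parts decay like $e^{-a|\im z|}$, while $F$ grows only polynomially, so the horizontal sides at $\im z=\pm Y$ vanish as $Y\to\infty$. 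What is left are the integrals along the vertical lines $\re z=0$ and $\re z=1$.

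On $\re z=0$ I write $z=\pm iy$. The upper and lower pieces combine with the difference of the boundary values $(iy)^{2/3}$ and $(-iy)^{2/3}$, using the principal branches inherited from the strip. This gives
\[
y^{2/3}(1+y^2)^{2/3}\bigl(e^{i\pi/3}-e^{-i\pi/3}\bigr)=i\sqrt3\,y^{2/3}(1+y^2)^{2/3},
\]
against the kernel $\frac1{e^{ay}-1}$, and after tracking the orientation and the factor $\frac1{2i}\cdot 2i$ this should yield $-\sqrt3\int_0^\infty\frac{y^{2/3}(1+y^2)^{2/3}}{e^{ay}-1}\dd y$. On $\re z=1$ I write $z=1\pm iy$, so that $e^{-iaz}=e^{-ia}e^{ay}$. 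The lower-half contribution is the complex conjugate of the upper-half one, since $F(\bar z)=\overline{F(z)}$. Together they give $2\re\int_0^\infty\frac{-iF(1+iy)}{e^{ay-ia}-1}\dd y$.

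The main obstacle is rigor at the corners $z=0$ and $z=1$. There $F$ has branch points, and the kernel may have poles on the contour: at $z=0$ always, and at $z=1$ when $a\in2\pi\mathbb N$. I will handle this by indenting the contour with small quarter-circles of radius $\epsilon$ around $0$ and $1$. Near $0$ the integrand is $O(|z|^{2/3}\cdot|z|^{-1})$, and near $1$ it is $O(|z-1|^{2/3}\cdot|z-1|^{-1})$, so the arcs contribute $O(\epsilon^{2/3})\to0$. The resulting $y$-integrals converge at $y=0$ for the same reason. Apart from this, the steps that need care are the sign and branch bookkeeping on the line $\re z=0$ and the orientation of each piece.
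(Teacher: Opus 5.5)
Your proposal is correct and takes essentially the same Abel--Plana route as the paper: you split the cotangent kernel into a constant part, which Cauchy's theorem turns into $\int_0^1 F$, and an exponentially decaying part that leaves only the vertical sides, and you combine the two halves using $F(\bar z)=\overline{F(z)}$. The one technical difference is that the paper works on the shrunken strip $\varepsilon<\re z<1-\varepsilon$ and lets $\varepsilon\to0$ using the uniform $O(y^{-1/3})$ bound, whereas you keep the lines $\re z=0,1$ and indent around $z=0$ and $z=1$; this is equally valid.
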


\begin{proof}
	Given $a>0$ we write $h:=2\pi/a$. For $\varepsilon\in(0,1/2)$ and $R>0$, we let \(\Gamma_{\varepsilon,R}\) be the positively oriented boundary of \(\{\varepsilon<\re z<1-\varepsilon\,,\ |\im z|<R\}\). We only consider $\varepsilon$ such that neither vertical side passes through a point of \(h\mathbb Z\).
	The function
	\[
	G_h(z):=\pi F(z)\cot\!\left(\frac{\pi z}{h}\right)
	\]
	has residue \(hF(nh)\) at \(z=nh\)\,. Hence, by the residue theorem,
	\[
	\frac{1}{2\pi i}\int_{\Gamma_{\varepsilon,R}}G_h(z)\dd z
	=h\sum_{\varepsilon<nh<1-\varepsilon}F(nh) \,.
	\]
	
	We write \(\Gamma_{\varepsilon,R}^{+}\) for the upper half of the contour,
	oriented from \(1-\varepsilon\) to \(\varepsilon\). Using
	\[
	G_h(z)=-i\pi F(z)
	-2\pi i\,\frac{F(z)}{e^{-iaz}-1} \,,
	\]
	Cauchy's Theorem and complex conjugation, noting \(F(\bar z)=\overline{F(z)}\), give
	\[
	h\sum_{\varepsilon<nh<1-\varepsilon}F(nh)
	-\int_{\varepsilon}^{1-\varepsilon}F(x)\dd x
	=-2\re\int_{\Gamma_{\varepsilon,R}^{+}}
	\frac{F(z)}{e^{-iaz}-1}\dd z \,.
	\]
	
	The upper horizontal integral is \(\mathcal O(R^2e^{-aR})\).
	Moreover, \(F\) vanishes to order \(2/3\) at both endpoints, so the integrands on the vertical sides are \(O(y^{-1/3})\) near zero, uniformly for sufficiently small \(\varepsilon>0\).
	In particular, when \(a\in 2\pi\mathbb N\), we have \(e^{ay-ia}-1\sim ay\) as \(y\downarrow0\), so the corresponding singularity is integrable.
	Letting first \(R\to\infty\), then \(\varepsilon\to0\), yields
	\[
	E(h)=2\re\left(
	i\int_0^\infty\frac{F(iy)}{e^{ay}-1}\dd y
	-i\int_0^\infty\frac{F(1+iy)}{e^{ay-ia}-1}\dd y
	\right).
	\]
	Since by \eqref{eq:defF}, \(F(iy)=e^{i\pi/3}y^{2/3}(1+y^2)^{2/3}\), we obtain the claimed formula.
\end{proof}~

\begin{proof}[Proof of Proposition \ref{prop:quant3}]~\\

	\emph{Step 1. Upper bound from the representation formula.} Applying the identity
	\begin{equation}\label{eq:step1proof}
	\left|\frac1{re^{-i\theta}-1}-\frac1{r^2-1}\right|=\frac r{r^2-1}
	\qquad\text{for all}\ r>1
	\end{equation}
	with \(r=e^{ay}\) and \(\theta=a\), we obtain
	\[
	\begin{aligned}
		2\operatorname{Re}\frac{-iF(1+iy)}{e^{ay-ia}-1}
		={}&
		2\operatorname{Re}\frac{-iF(1+iy)}{e^{2ay}-1}\\
		&+2\operatorname{Re}\left[
		-iF(1+iy)\left(
		\frac1{e^{ay-ia}-1}-\frac1{e^{2ay}-1}
		\right)\right]\\
		\le{}&
		\frac{2\operatorname{Im}F(1+iy)}{e^{2ay}-1}
		+\frac{2e^{ay}|F(1+iy)|}{e^{2ay}-1}\\
		={}&
		\frac{2\operatorname{Im}F(1+iy)}{e^{2ay}-1}
		+\frac{|F(1+iy)|}{\sinh(ay)} \,.
	\end{aligned}	
	\]
	Inserting this into the identity from Lemma \ref{lem:representation3} and bounding \(y^{2/3}(1+y^2)^{2/3}\geq y^{2/3}\) gives
	\begin{equation}\label{circlebound}
		E(2\pi/a)\leq
		-\sqrt3\int_0^\infty\frac{y^{2/3}}{e^{ay}-1}\dd y
		+\int_0^\infty\frac{2\Ima F(1+iy)}{e^{2ay}-1}\dd y
		+\int_0^\infty\frac{|F(1+iy)|}{\sinh(ay)}\dd y.
	\end{equation}
	In the remainder of the proof, we will estimate the three integrals on the right side.

	\medskip
	
	\emph{Step 2. Bounds on \(F(1+iy)\) and integration.}
We shall use the bounds
\begin{equation}\label{Vbounds}
	\begin{aligned}
			|F(1+iy)|&\leq 2^{2/3}y^{2/3}\left(1+\frac5{12}y^2\right),\\
			2\Ima F(1+iy)&\leq
			2^{2/3}y^{2/3}\left(-\sqrt3+y+y^2\right).
	\end{aligned}
\end{equation}
Let us justify these inequalities. For the first one, we write \(F(1+iy)=2^{2/3}y^{2/3}e^{-i\pi/3}Q(y)\) with $Q(y) = (1+iy)^{2/3} (1+iy/2)^{2/3}$. The bound is then a consequence of \(|Q(y)|=[(1+y^2)(1+y^2/4)]^{1/3}\) and
\[
\left(1+\frac5{12}\, y^2\right)^3
-(1+y^2) \left(1+\frac14\, y^2\right)
=\frac{13}{48} \, y^4+\frac{125}{1728} \, y^6\ge0.
\]
For the second bound, we note that \(Q(0)=1\), \(Q'(0)=i\) and \(|Q''(y)|\le\frac79<1\) for all $y\geq 0$, so by Taylor's theorem
\[
|Q(y)-1-iy|\le\frac12y^2\qquad\text{for all}\ y\geq 0 \,.
\]
This easily implies the second bound in \eqref{Vbounds}.

For \(q>0\), define
\begin{equation}\label{eq:defmqnq}
M_q(a):=\int_0^\infty\frac{y^q}{e^{ay}-1}\dd y
\qquad\text{and}\qquad
N_q(a):=\int_0^\infty\frac{y^q}{\sinh(ay)}\dd y \,.
\end{equation}

Bounding \(y^{2/3}(1+y^2)^{2/3}\geq y^{2/3}\), equations \eqref{circlebound} and \eqref{Vbounds} give
\begin{align}
	E(2\pi/a)& \leq
	-\sqrt3 \, M_{2/3}(a)- 2^{2/3} \,\sqrt3 \, M_{2/3}(2a)
	+2^{2/3} \, M_{5/3}(2a)+2^{2/3} \, M_{8/3}(2a)\notag\\
	& \quad +2^{2/3} \, N_{2/3}(a)+\frac{2^{2/3}\, 5}{12} \, N_{8/3}(a) \notag\\
	&= -\left[ 1+\frac{3\sqrt3}{2}-2^{5/3} \right] M_{2/3}(a)+\frac14 \, M_{5/3}(a) \notag \\
	& \quad +\left[\frac18+\frac{2^{2/3}\, 5}{6}(1-2^{-11/3})\right]M_{8/3}(a) \,. \label{collected}
\end{align}
Here the equality comes from the following two identities:
\begin{equation}\label{eq:identitiesmqnq}
	M_q(2a)=2^{-q-1}M_q(a)
	\qquad\text{and}\qquad
	N_q(a) =2(1-2^{-q-1}) \, M_q(a) \,.
\end{equation}
Indeed, the first identity follows by a simple change of variables and the second one follows comparing the expansions
\[
\frac{1}{e^{ay}-1} = \sum_{j=0}^\infty e^{-(j+1)ay}
\qquad\text{and}\qquad
\frac1{\sinh(ay)}
=2\sum_{k=0}^{\infty}e^{-(2k+1)ay} \,.
\]

According to \cite[(25.5.1)]{NIST}, we have
\begin{equation}\label{eq:mqexplicit}
	M_q(a) =\frac{\Gamma(q+1) \, \zeta(q+1)}{a^{q+1}} \,.
\end{equation}
Using the fact that the zeta function is strictly decreasing on \((1,\infty)\), we get
\[
\frac{M_{5/3}(a)}{M_{2/3}(a)}
=\frac{5}{3a}\frac{\zeta(8/3)}{\zeta(5/3)}
<\frac5{3a} \,,
\qquad
\frac{M_{8/3}(a)}{M_{2/3}(a)}
=\frac{40}{9a^2}\frac{\zeta(11/3)}{\zeta(5/3)}
<\frac{40}{9a^2} \,.
\]
Since \(2^{2/3}<8/5\), we have
\[
\frac18+\frac{2^{2/3}\,5}{6}(1-2^{-11/3})
<\frac18+\frac43=\frac{35}{24}.
\]
Moreover, the bounds $\sqrt3>17/10$ and, again, $2^{2/3}<\frac85$ imply
\[
1+\frac{3\sqrt3}{2}-2^{5/3} >\frac7{20}.
\]

We conclude that
\begin{equation}\label{finalbound}
	E(2\pi/a)\leq M_{2/3}(a)
	\left[- \frac7{20} +\frac5{12a}+\frac{175}{27a^2}\right].
\end{equation}

\medskip

\emph{Step 3. Conclusion.}
For \(0<h\leq1\), we have \(a=2\pi/h\geq2\pi>6\). 
Consequently,
\[
- \frac7{20} +\frac5{12a}+\frac{175}{27a^2}
<-\frac7{20}+\frac5{72}+\frac{175}{972}
=-\frac{977}{9720}<-\frac1{10} \,.
\]
This bound, together with \eqref{finalbound} and the fact that, by \eqref{eq:mqexplicit}, \(M_{2/3}(a) = \frac{\Gamma(5/3)\,\zeta(5/3)}{(2\pi)^{5/3}}h^{5/3}\), proves Proposition \ref{prop:quant3}.
\end{proof}


\section{Proof of Theorem \ref{thm:threshold}}\label{sec:thm2dproof}

We recall that \(\g\) is the unique solution of \eqref{eq:root}, as discussed in Subsection \ref{sec:gammastar}. We will use the same notation $E(h)$ as in the previous section in \eqref{eq:defE} to denote the analogous quantity that we want to bound in Theorem \ref{thm:threshold}, that is,
\begin{equation}\label{eq:defE2}
E(h) := h\sum_{n=0}^{\infty}\bigl(h(n+\tfrac12)\bigr)^{1-2\g}
\bigl(1-h^2(n+\tfrac12)^2\bigr)_+^{\g}
-\int_0^1t^{1-2\g}(1-t^2)^{\g}\dd t \,.
\end{equation}
This should not cause any confusion, but rather emphasize the parallel nature of the proofs. Theorem~\ref{thm:threshold} is equivalent to the assertion that \(E(h)\leq 0\) for all \(h>0\), with equality if and only if \(h=2\sqrt{1-\gamma_*}\). We shall prove the following quantitative estimate, which involves the Dirichlet eta function, defined, for $p>0$, by
\begin{equation}\label{eq:defeta}
\eta(p):=\sum_{n\geq1}(-1)^{n-1}n^{-p} \,.
\end{equation}

\begin{proposition}\label{prop:quant2}
	For all $0<h\leq 2/3$, we have
	\begin{equation}\label{quant2}
		E(h)< -\frac{3 \, \Gamma(2-2\g) \, \eta(2-2\g)}{100 \, (2\pi)^{2-2\g}} \, h^{2-2\g} \,.
	\end{equation}
\end{proposition}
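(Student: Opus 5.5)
We follow the same scheme as in the proof of Proposition~\ref{prop:quant3}: an Abel--Plana representation, pointwise bounds along the vertical line $\re z=1$, and explicit Mellin-type integrals. Write $h=2\pi/a$ and set
\[
F(z):=z^{1-2\g}(1-z^2)^{\g},
\]
which is holomorphic on $0<\re z<1$. The nodes are now $(n+\frac12)h$. So we replace $\cot(\pi z/h)$ by $-\tan(\pi z/h)$, whose poles sit exactly at the half-integer multiples of $h$, each with residue $-h/\pi$. This changes the kernels: $\frac{1}{e^{ay}-1}$ becomes $\frac{1}{e^{ay}+1}$ on the line $\re z=0$, and $\frac{1}{e^{ay-ia}-1}$ becomes $\frac{1}{e^{ay-ia}+1}$ on $\re z=1$. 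Arguing verbatim as in Lemma~\ref{lem:representation3}, we obtain
\[
E(2\pi/a)=-2\sin(\pi\g)\int_0^\infty\frac{y^{1-2\g}(1+y^2)^{\g}}{e^{ay}+1}\dd y
+2\re\int_0^\infty\frac{-iF(1+iy)}{e^{ay-ia}+1}\dd y .
\]
The first coefficient comes from $F(iy)=e^{i\pi(1-2\g)/2}y^{1-2\g}(1+y^2)^{\g}$, and one should double-check its sign. The integral $\int_0^\infty y^{q}/(e^{ay}+1)\dd y=\Gamma(q+1)\eta(q+1)a^{-q-1}$ is exactly what produces the constant $\Gamma(2-2\g)\eta(2-2\g)$ in \eqref{quant2}.

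Next we treat the oscillating term via the analogue of \eqref{eq:step1proof}, namely $\bigl|\frac{1}{re^{-i\theta}+1}-\frac{1}{r^2-1}\bigr|=\frac{r}{r^2-1}$. This splits it into a non-oscillating piece $2\Ima F(1+iy)/(e^{2ay}-1)$ plus an error bounded by $|F(1+iy)|/\sinh(ay)$. We then need bounds of the form \eqref{Vbounds} for $F(1+iy)=(2y)^{\g}e^{-i\pi\g/2}(1+iy)^{1-2\g}\cdots$. Near $y=0$ one has $F(1+iy)\approx (2y)^{\g}e^{-i\pi\g/2}$, so the leading terms now carry the power $y^{\g}$ rather than $y^{1-2\g}$. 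Since $\g\approx0.61>1-2\g\approx-0.23$, these terms are of lower order $a^{-1-\g}$ compared with the main negative term of order $a^{-(2-2\g)}$. We would bound $|F(1+iy)|\le 2^{\g}y^{\g}(1+cy^2)$ and $2\Ima F(1+iy)\le 2^{\g}y^{\g}(-2\sin(\pi\g/2)+c'y+c''y^2)$ by Taylor estimates on $Q(y)=(1+iy)^{1-\g}(1+iy/2)^{\g}$, up to phase. Integrating with $M_q$, $N_q$ from \eqref{eq:defmqnq} and \eqref{eq:mqexplicit} then gives
\[
E(2\pi/a)\le -2\sin(\pi\g)\,\Gamma(2-2\g)\eta(2-2\g)\,a^{-(2-2\g)}+\sum_j C_j a^{-q_j-1},
\]
with all $q_j+1\ge 1+\g>2-2\g$.

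Finally, for $0<h\le 2/3$ we have $a\ge 3\pi$. We factor out $a^{-(2-2\g)}$ and check that the bracket is below $-\frac{3}{100}$, i.e.\ that $2\sin(\pi\g)\approx1.88$ beats the positive corrections, which are damped by $a^{-(3\g-1)}$ with $3\g-1\approx0.84$ and by higher powers. Here we need rational enclosures of $\g$ (say $0.612<\g<0.613$, obtainable from \eqref{eq:root} and monotonicity as in Subsection~\ref{sec:gammastar}) together with crude bounds on $\Gamma$ and $\zeta$/$\eta$ values.

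The main obstacle is this last numerical step. Because $3\g-1<1$, the positive contributions decay slowly in $a$; moreover the constants $2^{\g}\Gamma(1+\g)\zeta(1+\g)$ are large, since $\zeta(1+\g)\approx2.2$. It is not clear a priori that the margin $2\sin(\pi\g)-\frac{3}{100}\cdot$(normalisation) survives at $a=3\pi$. If the crude $\sinh$ bound is too lossy, we would first try keeping the exact phase $e^{-ia}$ for the leading $y^{\g}$ term: $\re\bigl(e^{-i\pi\g/2}\cdot(\ldots)\bigr)$ integrates explicitly against $1/(e^{ay-ia}+1)$ via polylogarithm series $\sum_k (-1)^{k}e^{ika}k^{-1-\g}$, bounded in modulus by $\eta(1+\g)$. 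Only the $O(y^{1+\g})$ remainders would then be estimated crudely.
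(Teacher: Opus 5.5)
There is a genuine gap: your representation formula is wrong, and the error sits in the term that drives the whole estimate. With $G_h(z)=-\pi F(z)\tan(\pi z/h)=-i\pi F(z)+2\pi i\,F(z)/(e^{-iaz}+1)$, the contour argument gives
\[
E(2\pi/a)=2\re\Bigl(-i\int_0^\infty\frac{F(iy)}{e^{ay}+1}\dd y+i\int_0^\infty\frac{F(1+iy)}{e^{ay-ia}+1}\dd y\Bigr).
\]
The overall sign is therefore opposite to the cotangent case of Lemma~\ref{lem:representation3}.

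Since $-iF(iy)=e^{-i\pi\g}y^{1-2\g}(1+y^2)^{\g}$, the non-oscillating coefficient is $2\cos(\pi\g)\approx-0.69$, not $-2\sin(\pi\g)\approx-1.88$. Your value is what the cotangent formula gives for a weight $z^{2-2\gamma}$, not for $z^{1-2\g}$ with the tangent kernel. You can check this independently. As $h\to0$ the leading term of $E(h)$ is the Hurwitz-zeta term $(2^{2\g-1}-1)\zeta(2\g-1)h^{2-2\g}$. The functional equation of $\zeta$ turns this into exactly $2\cos(\pi\g)\Gamma(2-2\g)\eta(2-2\g)(2\pi)^{2\g-2}h^{2-2\g}$, so your coefficient is off by the factor $|\tan(\pi\g)|\approx2.7$. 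Note also that $2\cos(\pi\g)<0$ only because $\g>1/2$, which you must extract from an enclosure of $\g$.

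Two further slips cancel each other. Your oscillatory term has the wrong sign, and your circle identity is false; the correct one is $\bigl|\frac{1}{re^{-i\theta}+1}+\frac{1}{r^2-1}\bigr|=\frac{r}{r^2-1}$. Because of the cancellation, the bound you state for the oscillatory part, $\int_0^\infty\frac{2\Ima F(1+iy)}{e^{2ay}-1}\dd y+\int_0^\infty\frac{|F(1+iy)|}{\sinh(ay)}\dd y$, is actually correct. Also, $Q(y)=(1+iy)^{1-2\g}(1+iy/2)^{\g}$, so $Q'(0)=i(1-\frac32\g)$.

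This is not cosmetic, because it removes the margin you are relying on. Your intermediate inequality $E\le-2\sin(\pi\g)\Gamma(2-2\g)\eta(2-2\g)a^{-(2-2\g)}+O(a^{-1-\g})$ is false for large $a$. With the correct coefficient, your scheme becomes exactly the paper's proof. Using the Taylor bounds on $Q$ and \eqref{eq:identitiesmqnq}, \eqref{eq:mqexplicit}, it yields $E(2\pi/a)<2\cos(\pi\g)L_{1-2\g}(a)+1.29\,M_{\g}(a)$ for $a>9$, where $L_q(a)=\int_0^\infty y^q(e^{ay}+1)^{-1}\dd y$.

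Here $-2\cos(\pi\g)>0.67$. Against this, the quotient $1.29\,M_{\g}(a)/L_{1-2\g}(a)=\frac{1.29\,\Gamma(1+\g)\zeta(1+\g)}{\Gamma(2-2\g)\eta(2-2\g)}a^{1-3\g}$ can be certified only to lie below $0.64$ at $a=9$. The constant $3/100$ in the statement is precisely what survives. So the final step needs careful rational enclosures: $0.61<\g<0.62$, $\Gamma(2-2\g)>1.18$, $\eta(2-2\g)>0.58$, $\Gamma(1+\g)\zeta(1+\g)<2.1$, and explicit bounds on the coefficients of $M_{\g}$, $M_{\g+1}$, $M_{\g+2}$.

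Your fallback would not rescue this as stated. The sum $\bigl|\sum_{k\ge1}(-1)^ke^{ika}k^{-1-\g}\bigr|$ is not bounded by $\eta(1+\g)$; it equals $\zeta(1+\g)$ when $e^{ia}=-1$. The honest modulus bound $2^{1+\g}\zeta(1+\g)$ on the leading $y^{\g}$ contribution is worse than the $(2^{1+\g}-1-\sin\frac{\pi\g}{2})\zeta(1+\g)$ that the circle decomposition gives. The crude $\sinh$ route is the right one; what you are missing is the correct representation formula.
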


Numerically, we find $\frac{3 \, \Gamma(2-2\g) \, \eta(2-2\g)}{100 \, (2\pi)^{2-2\g}} \approx 0.005653$. We do not expect this bound to be close to optimal. Our results from \cite{FHN}, however, show that the order $h^{2-2\gamma_*}$ is optimal for $h\to 0$.

By combining the terms $n=2k-1$ and $n=2k$ in the sum defining $\eta(p)\,$, we see that $\eta(p)>0$ for all $p>0$. Therefore, Proposition \ref{prop:quant2} proves Theorem \ref{thm:threshold} in the range $0<h\leq 2/3$. In the range $2/3\leq h<2$ only the first term in the sum defining $E(h)$ can be non-zero. Setting $x=h^2/4$, the beta integral, \eqref{eq:root} and a simple maximization with respect to $x\in(0,1)$ give
\[
E(h)=2x^{1-\g}(1-x)^{\g}-\frac{\pi\g}{2\sin(\pi\g)}\leq 0 \,.
\]
Equality holds if and only if $x=1-\g$, that is, $h=2\sqrt{1-\g}$; this value belongs to $(2/3,2)$ by \eqref{eq:gammastarbounds} below. Finally, for $h\geq 2$ all terms in the sum defining $E(h)$ vanish, while the integral is positive. Thus, Theorem \ref{thm:threshold} follows, once Proposition \ref{prop:quant2} has been proved.

As in the proof of Proposition \ref{prop:quant3}, our argument relies on an integral representation formula for \(E(h)\). Again we will use the parametrization $h=2\pi/a$ and work with the variable $a$. Using the same notation for corresponding objects we now set (compare with \eqref{eq:defF})
\begin{equation}\label{eq:defF2}
F(z):=z^{1-2\g}(1-z^2)^{\g} \,,
\end{equation}
which is holomorphic on the strip $0 < \re z <1$.

\begin{lemma}\label{lem:representation2}
	For all $a>0$, we have
	\[
	E(2\pi/a)=2\cos(\pi\g)\int_0^\infty\frac{y^{1-2\g}(1+y^2)^{\g}}{e^{ay}+1}\dd y
	-2\im\int_0^\infty\frac{F(1+iy)}{e^{ay-ia}+1}\dd y \,.
	\]
\end{lemma}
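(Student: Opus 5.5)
We will follow the proof of Lemma~\ref{lem:representation3} almost verbatim. The only change is that the nodes are now the half-integer points $(n+\frac12)h$ instead of $nh$, so we replace $\cot$ by $\tan$. Write $h=2\pi/a$ and consider
\[
G_h(z):=-\pi F(z)\tan\!\left(\frac{\pi z}{h}\right),
\]
with $F$ as in \eqref{eq:defF2}. Since $\tan(\pi z/h)$ has simple poles at $z=(n+\frac12)h$ with residue $-h/\pi$, the function $G_h$ has residue $hF((n+\frac12)h)$ there. As before, $\Gamma_{\varepsilon,R}$ denotes the positively oriented boundary of $\{\varepsilon<\re z<1-\varepsilon,\ |\im z|<R\}$, with $\varepsilon$ chosen so that no node lies on a vertical side. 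The residue theorem then gives
\[
\frac{1}{2\pi i}\int_{\Gamma_{\varepsilon,R}}G_h(z)\dd z=h\sum_{\varepsilon<(n+\frac12)h<1-\varepsilon}F\bigl((n+\tfrac12)h\bigr).
\]

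\emph{Decomposition of $\tan$.} With $w=\pi z/h=az/2$ one has the identity $\tan w=i-\dfrac{2i}{1+e^{-2iw}}$. Hence
\[
G_h(z)=-i\pi F(z)+\frac{2\pi i\,F(z)}{e^{-iaz}+1},
\]
and the second term decays exponentially in the upper half-plane. Split the contour into its upper and lower halves. On the upper half, deform the $-i\pi F$ term by Cauchy's theorem onto the segment $[\varepsilon,1-\varepsilon]$, where it produces $\frac12\int F$. On the lower half, use $F(\bar z)=\overline{F(z)}$ and complex conjugation; this doubles the real part and reproduces the other $\frac12\int F$. The result is
\[
h\sum F-\int_\varepsilon^{1-\varepsilon}F\dd x=2\re\int_{\Gamma^+_{\varepsilon,R}}\frac{F(z)}{e^{-iaz}+1}\dd z
\]
up to the overall sign, which must be tracked. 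The top horizontal side is $O(R^{2}e^{-aR})$. On the vertical sides $z=iy$ and $z=1+iy$ we have
\[
e^{-iaz}=e^{ay}\qquad\text{and}\qquad e^{-iaz}=e^{ay-ia},
\]
respectively.

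\emph{Boundary terms.} On the left side we use $F(iy)=e^{i\pi(1-2\g)/2}\,y^{1-2\g}(1+y^2)^{\g}$. Multiplying by the factor $i$ from $\dd z=i\dd y$ and taking real parts gives $\re\bigl(i\,e^{i\pi/2-i\pi\g}\bigr)=-\cos(\pi\g)$. With the orientation, this produces the term
\[
2\cos(\pi\g)\int_0^\infty\frac{y^{1-2\g}(1+y^2)^{\g}}{e^{ay}+1}\dd y .
\]
On the right side, $\re\bigl(i\,F(1+iy)/(\cdots)\bigr)=-\im\bigl(F(1+iy)/(\cdots)\bigr)$, which gives
\[
-2\im\int_0^\infty\frac{F(1+iy)}{e^{ay-ia}+1}\dd y .
\]
I will verify both signs carefully against the model computation in Lemma~\ref{lem:representation3}.

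\emph{Limits (the main technical point).} Justifying the limits $R\to\infty$ and then $\varepsilon\to0$ is where care is needed.

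1. At $z=0$, $F$ behaves like $|z|^{1-2\g}$ with $1-2\g>-1$, and $1/(e^{ay}+1)$ is bounded, so the left-side integrand is uniformly integrable.

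2. At $z=1$, $|F(1+iy)|\lesssim y^{\g}$. The denominator $e^{ay-ia}+1$ vanishes at $y=0$ exactly when $e^{-ia}=-1$, that is, when a node $(n+\frac12)h$ equals $1$. In that case the integrand is $O(y^{\g-1})$, which is still integrable.

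3. Since $\g\in(0,1)$, dominated convergence applies in both limits, uniformly for small $\varepsilon$.

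The remaining possible obstacle is the case where a node coincides with an endpoint. I handle it by the same observation as in Lemma~\ref{lem:representation3}: the vanishing of $F$ at the endpoint makes the resulting singularity integrable and the endpoint node contributes $hF(1)=0$ to the sum, so no extra contribution arises.
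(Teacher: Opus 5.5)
Your proposal is correct and is essentially the paper's own proof: it uses the same kernel $G_h(z)=-\pi F(z)\tan(\pi z/h)=-i\pi F(z)+2\pi i F(z)/(e^{-iaz}+1)$, the same upper-half-contour and conjugation argument, and the same uniform bounds $\mathcal O(1+y^{1-2\g})$ and $\mathcal O(y^{\g-1})$ on the two vertical sides (the latter covering a node at $z=1$). The overall sign you leave open is $+$, i.e.\ $h\sum F-\int_\varepsilon^{1-\varepsilon}F\dd x=+2\re\int_{\Gamma^+_{\varepsilon,R}}F(z)\,(e^{-iaz}+1)^{-1}\dd z$; this is opposite to the cotangent case of Lemma~\ref{lem:representation3}, and it is forced by your own observation that the $-i\pi F$ term contributes $+\int F$. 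It is also exactly what your boundary-term computation presupposes, so your final formula is right.
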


\begin{proof}
	The proof is essentially the same as that of Lemma \ref{lem:representation3}, replacing the cotangent with the tangent.
	Given $a>0$, we write $h:=2\pi/a$. We use the contours $\Gamma_{\varepsilon,R}$ and $\Gamma_{\varepsilon,R}^+$ from the proof of Lemma \ref{lem:representation3}, with neither vertical side passing through a point of $h(\mathbb Z+\tfrac12)$. The function
	\[
	G_h(z):=-\pi F(z)\tan\!\left(\frac{\pi z}{h}\right)
	=-i\pi F(z)+2\pi i\,\frac{F(z)}{e^{-iaz}+1}
	\]
	has residue $hF(h(n+\tfrac12))$ at $z=h(n+\tfrac12)$. As in that proof, the residue theorem, Cauchy's theorem and complex conjugation, noting \(F(\bar z)=\overline{F(z)}\), give
	\[
	h\sum_{\varepsilon<h(n+\frac12)<1-\varepsilon}F(h(n+\tfrac12))
	-\int_{\varepsilon}^{1-\varepsilon}F(x)\dd x
	=2\re\int_{\Gamma_{\varepsilon,R}^{+}}\frac{F(z)}{e^{-iaz}+1}\dd z \,.
	\]
	The upper horizontal integral is $\mathcal O(Re^{-aR})$. Near zero, the vertical integrands are uniformly $\mathcal O(1+y^{1-2\g})$ on the left and $\mathcal O(y^{\g-1})$ on the right, both integrable. The right-hand bound includes the case $e^{-ia}=-1$, when the denominator has a simple zero at $z=1$. Letting first $R\to\infty$, then $\varepsilon\to0$, yields
	\[
	E(h)=2\re\left(
	-i\int_0^\infty\frac{F(iy)}{e^{ay}+1}\dd y
	+i\int_0^\infty\frac{F(1+iy)}{e^{ay-ia}+1}\dd y
	\right).
	\]
	Since $F(iy)=ie^{-i\pi\g}y^{1-2\g}(1+y^2)^{\g}$, we obtain the claimed formula.
\end{proof}

\begin{proof}[Proof of Proposition \ref{prop:quant2}]~\\

	\emph{Step 1. Upper bound from the representation formula.} Arguing similarly as in Step 1 of the proof of Proposition \ref{prop:quant3} and using \(\cos(\pi\g)<0\) (as a consequence of $1/2<\g<1$, which follows easily from the discussion in Subsection \ref{sec:gammastar} since the left side of \eqref{eq:root} is $1/2$, which exceeds $\pi/8$), we deduce from the identity in Lemma \ref{lem:representation2} the inequality
	\begin{equation}\label{circle}
		\begin{aligned}
			E(2\pi/a)\leq{}&2\cos(\pi\g)\int_0^\infty\frac{y^{1-2\g}}{e^{ay}+1}\dd y\\
			&+\int_0^\infty\frac{2\Ima F(1+iy)}{e^{2ay}-1}\dd y
			+\int_0^\infty\frac{|F(1+iy)|}{\sinh(ay)}\dd y \,.
		\end{aligned}
	\end{equation}
	In the remainder of the proof, we will estimate the three integrals on the right side.
	
	\medskip
	
	\emph{Step 2. Bounds on \(F(1+iy)\) and integration.}	
	We shall use the inequalities
	\begin{equation}\label{Vbounds2}
		\begin{aligned}
			|F(1+iy)|&\leq 2^{\g}y^{\g}\left(1+\frac{y^2}{12}\right),\\
			2\Ima F(1+iy)&\leq 2^{1+\g}y^{\g}
			\left(-\sin\frac{\pi\g}{2}+\lambda\left(\cos\frac{\pi\g}{2}\right)y+Dy^2\right) \,.
		\end{aligned}
	\end{equation}
	Here
	\[
	\lambda:=1-\frac32\g,
	\qquad D:=\frac{\g(23\,\g-11)}8 \,.
	\]
	Let us justify these bounds. We write
	\[
	F(1+iy)=2^{\g}y^{\g}e^{-i\pi\g/2}Q(y),
	\qquad Q(y):=(1+iy)^{1-2\g}(1+iy/2)^{\g} \,.
	\]
	By \eqref{eq:gammastarbounds} and concavity,
	\[
	|Q(y)|=(1+y^2)^{1/2-\g}(1+y^2/4)^{\g/2}
	\leq(1+y^2/4)^{\g/2}
	\leq1+\frac{\g}{8}y^2\leq1+\frac{y^2}{12} \,.
	\]
	For the second bound, note that $Q(0)=1$ and $ Q'(0)=i\lambda$. Differentiating twice and using $1\leq|1+iy/2|\leq|1+iy|$, we obtain
	\[
	|Q''(y)|\leq 2\g(2\g-1)+\g(2\g-1)+\frac{\g(1-\g)}4=2D \,.
	\]
	Thus Taylor's formula gives
	\[
	|Q(y)-1-i\lambda y|\leq Dy^2
	\qquad\text{for all}\ y\geq0 \,.
	\]
	Taking imaginary parts after multiplication by $e^{-i\pi\g/2}$ proves the second bound in \eqref{Vbounds2}.
	
	Let $M_q$ and $N_q$ be as in \eqref{eq:defmqnq}, and for $q>-1$ define
	\[
	L_q(a):=\int_0^\infty\frac{y^q}{e^{ay}+1}\dd y \,.
	\]
	Equations \eqref{circle} and \eqref{Vbounds2} give
	\begin{align}
		E(2\pi/a)\leq{}&2\cos(\pi\g) \, L_{1-2\g}(a)
		-2^{1+\g}\left(\sin\frac{\pi\g}{2}\right)M_{\g}(2a)\notag\\
		&+2^{1+\g}\lambda\left(\cos\frac{\pi\g}{2}\right)M_{\g+1}(2a)
		+2^{1+\g}D\,M_{\g+2}(2a)\notag\\
		&+2^{\g}\,N_{\g}(a)+\frac{2^{\g}}{12} \, N_{\g+2}(a)\notag\\
		={}&2\cos(\pi\g) \, L_{1-2\g}(a)
		+\left(2^{1+\g}-1-\sin\frac{\pi\g}{2}\right)M_{\g}(a)\notag\\
		&+\frac{\lambda}{2}\cos\frac{\pi\g}{2} \, M_{\g+1}(a)
		+\left[\frac D4+\frac{2^{\g}}6(1-2^{-\g-3})\right]M_{\g+2}(a) \,.
		\label{collected2}
	\end{align}
	Here the equality follows from the two identities in \eqref{eq:identitiesmqnq}.
	
	In what follows, we need numerical estimates on the coefficients appearing in \eqref{collected2}. They rely on the enclosure
	\begin{equation}\label{eq:gammastarbounds}
		0.61<\g<0.62 \,,
	\end{equation}
	which can be justified as follows. In view of Subsection \ref{sec:gammastar}, it suffices to check
	\[
	\begin{aligned}
		2\left(\frac{61}{100}\right)^{61/100}
		\left(\frac{39}{100}\right)^{39/100}
		&>\frac{51}{50}>
		\frac{61\pi}{200\sin(61\pi/100)},\\
		2\left(\frac{31}{50}\right)^{31/50}
		\left(\frac{19}{50}\right)^{19/50}
		&<\frac{103}{100}<
		\frac{31\pi}{100\sin(31\pi/50)} \,.
	\end{aligned}
	\]
	The comparisons involving powers follow by raising to the powers $100$ and $50$, respectively. The other two follow from $157/50<\pi<22/7$ and
	\[
	1-\frac{x^2}{2}\leq\cos x\leq1-\frac{x^2}{2}+\frac{x^4}{24},
	\qquad \sin(\pi\gamma)=\cos\bigl(\pi(\gamma-\tfrac12)\bigr).
	\]
	This proves \eqref{eq:gammastarbounds}.
	
	Using \eqref{eq:mqexplicit}, \eqref{eq:gammastarbounds}, and the strict decrease of the zeta function on $(1,\infty)$, we get
	\[
	\begin{aligned}
		\frac{M_{\g+1}(a)}{M_{\g}(a)}
		&=\frac{\g+1}{a}\frac{\zeta(\g+2)}{\zeta(\g+1)}
		<\frac{1.62}{a},\\
		\frac{M_{\g+2}(a)}{M_{\g}(a)}
		&=\frac{(\g+1)(\g+2)}{a^2}\frac{\zeta(\g+3)}{\zeta(\g+1)}
		<\frac{4.245}{a^2} \,.
	\end{aligned}
	\]
	Moreover, \eqref{eq:gammastarbounds} implies
	\[
	\begin{aligned}
		2\cos(\pi\g)&<2\cos(0.61\pi)<-0.67 \,,\\
		2^{1+\g}-1-\sin\frac{\pi\g}{2}&<2^{1.62}-1-\sin(0.305\pi)<1.26 \,,\\
		0<\frac{\lambda}{2}\cos\frac{\pi\g}{2}
		&<\frac{0.085}{2}\cos(0.305\pi)<0.025\,,\\
		& D < 0.26 \,,\\
		0<\frac D4+\frac{2^{\g}}6(1-2^{-\g-3})
		&=\frac D4+\frac{2^{\g}}6-\frac1{48}
		<\frac{0.26}{4}+\frac{1.537}{6}-\frac1{48}<0.31 \,.
	\end{aligned}
	\]
	The elementary numerical comparisons can also be checked using
	$223/71<\pi<22/7$, the alternating Taylor bounds for sine and cosine, and $2^{31}<1.537^{50}$.
	Thus, for every $a>0$, we obtain from \eqref{collected2} that
	\begin{equation}\label{finalbound2}
		E(2\pi/a)<-0.67 \, L_{1-2\g}(a)
		+\left(1.26+\frac{0.0405}{a}+\frac{1.31595}{a^2}\right)M_{\g}(a) \,.
	\end{equation}
	
	\medskip
	
	\emph{Step 3. Conclusion.} We now assume $0<h\leq2/3$. Then $a=2\pi/h\geq3\pi>9$, and
	\[
	1.26+\frac{0.0405}{a}+\frac{1.31595}{a^2}
	<1.26+\frac{0.0405}{9}+\frac{1.31595}{81}<1.29 \,.
	\]
	Consequently, \eqref{finalbound2} gives
	\begin{equation}\label{eq:simplifiedbound2}
		E(2\pi/a)<-0.67 \, L_{1-2\g}(a)+1.29 \, M_{\g}(a) \,.
	\end{equation}
	In Step 4, we shall prove that
	\begin{equation}\label{eq:finalestimate}
		1.29 \, M_{\g}(a) <0.64 \, L_{1-2\g}(a) \,.
	\end{equation}
	
	We note that
	\begin{equation}\label{eq:lq}
		L_{q}(a)=\Gamma(q+1) \, \eta(q+1) \, a^{-q-1} \,,
	\end{equation}
	which follows by expanding
	\[
	\frac1{e^{ay}+1}
	=\sum_{j=1}^{\infty}\left(e^{-(2j-1)ay}-e^{-2jay}\right)
	\]
	and integrating the nonnegative summands.
	
	Combining \eqref{eq:simplifiedbound2} and \eqref{eq:finalestimate} and using \eqref{eq:lq}, we conclude that
	\[
	E(h)<-0.03 \, L_{1-2\g}(a)
	=-\frac{3\, \Gamma(2-2\g)\, \eta(2-2\g)}{100 \, (2\pi)^{2-2\g}} \, h^{2-2\g}<0 \,.
	\]
	This is the claimed inequality \eqref{quant2}.
	
	\medskip
	
	\emph{Step 4. Technical estimates.}
	It remains to justify the numerical bound \eqref{eq:finalestimate}. We begin by establishing the bounds
	\begin{equation}\label{eq:numericalbounds2}
		\Gamma(2-2\g)>1.18\,,\qquad \eta(2-2\g)>0.58\,,\qquad
		\Gamma(\g+1)\zeta(\g+1)<2.1 \,.
	\end{equation}
	
	For the proof of these bounds, we use $0.76<2-2\g<0.78$, which follows from \eqref{eq:gammastarbounds}. For the second bound, the alternating-series estimate gives
	\[
	\begin{aligned}
		\eta(2-2\g)&\geq(2-2\g)\sum_{j=1}^8\int_{2j-1}^{2j}t^{2\g-3}\dd t\\
		&>\frac{0.76}{0.78}\sum_{j=1}^8
		\bigl((2j-1)^{-0.78}-(2j)^{-0.78}\bigr)>0.5848 \,.
	\end{aligned}
	\]
	Also, \eqref{eq:gammastarbounds}, monotonicity of the zeta function on $(1,\infty)$ and the integral estimate for the tail of the zeta series give
	\[
	\zeta(\g+1)<\zeta(1.61)
	<\sum_{n=1}^{20}n^{-1.61}+\frac{20^{-0.61}}{0.61}<2.264 \,.
	\]
	Finally, $\Gamma$ is decreasing on $(0,1)$ and log-convex on $(0,\infty)$, so
	\[
	\Gamma(2-2\g)>\Gamma(0.78)>1.18,
	\qquad
	\Gamma(\g+1)\leq\max\{\Gamma(1.61),\Gamma(1.62)\}<0.90 \,.
	\]
	For completeness, these numerical gamma bounds follow from Stirling's inequalities \cite[(5.6.1)]{NIST},
	\[
	0<\log\Gamma(x)-(x-\tfrac12)\log x+x-\tfrac12\log(2\pi)
	<\frac1{12x}\qquad(x>0),
	\]
	after shifting the argument by $20$ and using
	$\Gamma(x+20)=\Gamma(x)\prod_{j=0}^{19}(x+j)$.
	This gives, more explicitly,
	\[
	\Gamma(0.78)>1.1827,\qquad
	\Gamma(1.61)<0.8947,\qquad
	\Gamma(1.62)<0.8960 \,.
	\]
	Since $0.90\cdot2.264<2.1$, this proves all three bounds in \eqref{eq:numericalbounds2}.
	
	By \eqref{eq:mqexplicit} and $3\g-1>0.83$, we therefore have
	\[
	\begin{aligned}
		\frac{1.29 \, M_{\g}(a)}{L_{1-2\g}(a)}
		&=\frac{1.29 \, \Gamma(\g+1)\zeta(\g+1)}{\Gamma(2-2\g)\,\eta(2-2\g)}
		a^{-(3\g-1)}\\
		&<\frac{1.29\cdot2.1}{1.18\cdot0.58}\,9^{-0.83}<0.64 \,.
	\end{aligned}
	\]
	The last comparison is equivalently the exact rational inequality
	\[
	\left(\frac{1.29\cdot2.1}{0.64\cdot1.18\cdot0.58}\right)^{100}<9^{83} \,.
	\]
	This proves \eqref{eq:finalestimate} and completes the proof of Proposition \ref{prop:quant2}.
\end{proof}
	
	
	\subsection*{AI declaration}
	
	The formulation of the main theorems, the proof strategy, including the use of Abel--Plana-type formulas, and initial numerical experiments are exclusively due to the authors. The implementation of these ideas was developed with the help of generative LLMs. All mathematical arguments and proofs in the final manuscript were checked and written by the authors, who take full responsibility.
	

\end{document}